\theoremstyle{plain}
\newtheorem{lem}{Lemma}
\newtheorem{prop}[lem]{Proposition}
\newtheorem{thm}[lem]{Theorem}
\newtheorem{cor}[lem]{Corollary}
\theoremstyle{definition}
\newtheorem{example}{Example}
\theoremstyle{remark}
\newtheorem{rem}[lem]{Remark}
\newcommand{\D}{\mathbb D}
\newcommand{\PS}{\mathbb P}
\newcommand{\R}{\mathbb R}
\newcommand{\HS}{\mathbb H}
\newcommand{\qi}{\mathbf{i}}
\newcommand{\qj}{\mathbf{j}}
\newcommand{\qk}{\mathbf{k}}
\newcommand{\qe}{\overline}
\newcommand{\eps}{\epsilon}
\DeclareMathOperator{\primal}{primal}
\title{Factorization of Rational Curves in the Study Quadric and Revolute Linkages}
\author{%
  Gabor Hegedüs\thanks{Johann Radon Institute for
    Computational and Applied Mathematics Austrian Academy of Sciences
    (RICAM), Altenbergerstrasse 69, 4040 Linz, Austria, e-mail:
    \{gabor.hegedues\}\{josef.schicho\}@oeaw.ac.at RICAM Linz},
  \addtocounter{footnote}{-1} %
  Josef Schicho\footnotemark{}, and %
  Hans-Peter Schröcker\thanks{University Innsbruck, Unit Geometry
    and CAD, Technikerstra\ss e 13, 6020 Innsbruck, Austria, e-mail:
    hans-peter.schroecker@uibk.ac.at}}
\begin{document}

\maketitle

\begin{abstract}
  Given a generic rational curve $C$ in the group of Euclidean
  displacements we construct a linkage such that the constrained
  motion of one of the links is exactly $C$. Our construction is based
  on the factorization of polynomials over dual quaternions. Low
  degree examples include the Bennett mechanisms and contain new types
  of overconstrained 6R-chains as sub-mechanisms.
\end{abstract}

\noindent Keywords: Dual quaternions, rational motion, factorization,
Bennett mechanism, overconstrained mechanism, 6R-chain.

\par\medskip\noindent MSC 2010:
12D05, 
51J15, 
68T40  

\section{Introduction}

The research on this paper started with an attempt to understand the
geometry of Bennett linkages
\cite{bennett03,bennett14,bottema90,BHS,krames37,perez02} from the
point of view of dual quaternions. The group of Euclidean
displacements can be embedded as an open subset of the Study quadric
in the projectivization of the dual quaternions regarded as a real
vector space of dimension eight. Rotation subgroups and and their
composition then get an algebraic meaning (see Hao \cite{Hao}, Selig
\cite[Chapter~9 and 10]{selig05}). This was exploited in \cite{BHS} to
devise an algorithm for the synthesis of a Bennett linkage to three
pre-assigned poses. The key observation there was that the coupler
curve is the intersection of a unique 2-plane with the Study quadric.
Here, we translate the synthesis problem entirely into the language of
dual quaternions and we show that the problem is equivalent to the
factorization of a left quadratic polynomial into two linear ones.

Factorizations of left polynomials over the quaternions have been
studied by Niven in \cite{N}, who was interested in the number of such
factorizations, and Gordon and Motzkin in \cite{M}, who proved that
this number is either infinite or at most equal to the factorial of
the degree of the polynomial. (\cite{M} studies more generally
polynomials over central simple algebras over commutative fields.) The
more recent paper \cite{HS} by Huang and So gives an explicit solution
formula for quadratic polynomials. It is not difficult to extend these
results to dual quaternions.

Once the relation between the closure conditions in linkages and the
factorizations of left polynomials over dual quaternions became clear,
it also became clear that this relation holds for arbitrary linkages
with rotational joints. Given a generic rational curve $C$ of degree
$n$ on the Study quadric with parametrization $P(t)$, we construct
$n!$ different factorizations $P(t) = (t-h_1) \cdots (t-h_n)$ with
rotation quaternions $h_1,\ldots,h_n$ (or translation quaternions in
limiting cases). Each factorization corresponds to the movement of an
open $n$R-chain that guides the end-effector along $C$
(Section~\ref{sec:rational-curves}). Combining all open chains yields
an overconstrained mechanism whose combinatorial structure is
investigated in Section~\ref{sec:interchanging-factors}. In
Section~\ref{sec:prismatic-joints}, we discuss the extension to
linkages with prismatic joints.

The case $n = 2$ is just the Bennett case. Our results agree with the
recent findings of \cite{Ham} and naturally include the limit case of
Bennett linkages (RPRP linkages). For $n = 3$ we obtain linkages that
contain new examples of overconstrained 6R-chains. We present them in
more detail in Section~\ref{sec:overconstrained-6r}.

A preliminary short version of this article is \cite{hegedus12}. The
present article is more complete. We give strict proofs for all new
results, discuss the extension to prismatic joints, present additional
examples and illustrate our results by figures. The Maple source for
some of the examples and additional animations can be found on the
accompanying web-site
\url{http://geometrie.uibk.ac.at/schroecker/qf/}.

\section{Preliminaries}

In this section, we recall the well-known and classical description of
the group of Euclidean displacement by dual quaternions. The
presentation is already adapted to our needs in later sections. More
general references are \cite{Hao,selig05}.

We denote by $\mathrm{SE}_3$ the group of direct Euclidean
displacements, i.e., the group of maps from $\R^3$ to itself that
preserves distances and orientation. It is well-known that
$\mathrm{SE}_3$ is a semidirect product of the translation subgroup
$T$ and the special orthogonal group $\mathrm{SO}_3$, which may be
identified with the stabilizer of a single point.

We denote by $\D:=\R+\eps\R$ the ring of dual numbers, with
multiplication defined by $\eps^2=0$. The algebra $\HS$ is the
non-commutative algebra of quaternions, and $\D\HS$ is the algebra of
quaternions with coefficients in $\D$. Every dual quaternion has a
primal and a dual part (both quaternions in $\HS$), a scalar part in
$\D$ and a vectorial part in $\D^3$. The conjugated dual quaternion
$\qe{h}$ of $h$ is obtained by multiplying the vectorial part of $h$
by $-1$. The dual numbers $h\qe{h}$ and $h+\qe{h}$ are called
the \emph{norm} and \emph{trace} of $h$, respectively.

By projectivizing $\D\HS$ as a real 8-dimensional vectorspace, we
obtain $\PS^7$. The condition that the norm of $h$ is strictly real,
i.e.\ its dual part is zero, is a homogeneous quadratic equation. Its
zero set, denoted by $S$, is called the Study quadric. The linear
3-space of all dual quaternions with zero primal part is denoted by
$E$. It is contained in the Study quadric. The complement $S-E$ can be
identified with $\mathrm{SE}_3$. The primal part describes
$\mathrm{SO}_3$. Translations correspond to dual quaternions with
primal part $\pm 1$ and strictly vectorial dual part. More precisely,
the group isomorphism is given by sending $h=p+\eps q$ to the map
\begin{equation*}
  v \in \R^3 \mapsto\frac{pv\qe{p}+q\qe{p}}{p\qe{p}} \in \R^3.
\end{equation*}
The image of this map is strictly vectorial, the map is in
$\mathrm{SE}_3$, and the above formula indeed provides a group
homomorphism. Its bijectivity follows from the fact that both groups
are connected and of the same dimension.

A nonzero dual quaternion $h = p + \eps q$ represents a rotation if
and only if its norm and trace are strictly real ($h\qe{h}, h + \qe{h}
\in \R$) and its primal vectorial part is nonzero ($p \notin \R$). It
represents a translation if and only if its norm and trace are
strictly real and its primal vectorial part is zero ($p \in \R$). The
1-parameter rotation subgroups with fixed axis and the 1-parameter
translation subgroups with fixed direction can be characterized as
lines on $S$ through the identity element $1$. Translations are
characterized as those lines that meet the exceptional 3-plane~$E$.

\section{Rational motions and open linkages}
\label{sec:rational-curves}

In this section we study rational curves in the Study quadric and
construct a linkage with rotational joints such that the last link
moves along the prescribed curve. The main technique is the
factorization of polynomials over the dual quaternions.

\subsection{Polynomial factorization over dual quaternions}
\label{sec:factorization}

Let $C\subset S$ be a rational curve of degree $n>0$. Then there
exists a parametrization of $C$ by a polynomial
$(a_nt^n+a_{n-1}t^{n-1}+a_{n-2}t^{n-2}+\dots+a_0)_{t\in\PS^1}$, where
$a_0,\dots,a_n\in\D\HS$ and $\PS^1$ denotes the real projective line.
It is no loss of generality to assume that this polynomial is monic
($a_n = 1$). This can always be achieved by an appropriate choice of
coordinates.

Conversely, let $\D\HS[t]$ be the set of left polynomials with
coefficients over $\D\HS$. This set can be given a ring structure by
the convention that $t$ commutes with the coefficients. Let
$P\in\D\HS[t]$ be a polynomial of degree $n>0$. We call the map
$f_P\colon \PS^1\to\PS^7$, $t \mapsto P(t)$ the map associated to $P$.
The image is a rational curve of degree at most $n$.

If $Q\in\R[t]$, $Q\ne 0$, then the maps $f_P$ and $f_{PQ}$ are equal.
Conversely, if $P$ has a factor in $\R[t]$ of positive degree, we can
divide by it without changing the associated map. Note that, in
general, the set of right factors is different from the set of left
factors. However, a polynomial in $\R[t]$ or in $\D[t]$ is a left
factor if and only if it is a right factor, because it is in the
center of~$\D\HS[t]$.

For $n>0$, an open linkage with $n$ rotational joints can be described
algebraically as follows. Let $h_1,\dots,h_n$ be rotations; for each
$i$, the group parametrized by $(t-h_i)_{t\in\PS^1}$ -- the parameter
$t$ determines the rotation angle -- is the group of the $(i+1)$-th
link relative to the $i$-th link. If we choose the same parameter for
the $n$ rotations, then the position of the last link with respect to
the first link is given by a product
\begin{equation}
  \label{eq:factorization}
  P = (t-h_1)(t-h_2)\cdots(t-h_n)
  \quad\text{with}\quad
  t\in\PS^1.
\end{equation}
$P$ is a monic polynomial of degree $n$. Because it describes a rigid
motion, it satisfies the Study condition $P\qe{P} \in \R[t]$. In this
case we call $P$ a \emph{motion polynomial.} We ask the converse
question: Given a monic motion polynomial $P$ of degree $n$, is it
possible to construct a factorization of type
\eqref{eq:factorization}? We will see that, in general, the answer is
positive (Theorem~\ref{thm:open}, below).

For the time being we restrict ourselves to factorizations with
rotation quaternions only. Later, in
Section~\ref{sec:prismatic-joints}, we will show how to incorporate
translation quaternions as well.

\begin{lem}
  \label{lem:real}
  If $h$ is a rotation quaternion then $M := (t-h)(t-\qe{h})$ is in
  $\R[t]$ and has no real roots.
\end{lem}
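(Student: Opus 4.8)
The plan is to expand the product $M = (t-h)(t-\qe{h})$ and identify its coefficients explicitly. Since $t$ commutes with the coefficients in $\D\HS[t]$, we get $M = t^2 - (h + \qe{h})t + h\qe{h}$. Now I would invoke the characterization of rotation quaternions recalled in the Preliminaries: a dual quaternion $h$ represents a rotation precisely when its norm $h\qe{h}$ and trace $h+\qe{h}$ are strictly real (and its primal vectorial part is nonzero). Hence both coefficients $h+\qe{h}$ and $h\qe{h}$ lie in $\R$, which immediately shows $M \in \R[t]$.

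For the second assertion, I would argue that $M$ has no real roots. Write $N := h\qe{h} \in \R$ and $T := h + \qe{h} \in \R$, so $M = t^2 - Tt + N$, and I also need the homogeneous case $t = \infty$, which corresponds to the leading coefficient $1 \neq 0$, so $\infty$ is not a root. For finite real $t$, the real quadratic $t^2 - Tt + N$ has a real root only if its discriminant $T^2 - 4N$ is nonnegative. The key point is that for a rotation quaternion the norm is strictly positive, $N = h\qe{h} > 0$, and moreover $T^2 < 4N$; equivalently, $h$ is not real (its primal part $p \notin \R$), which forces the discriminant to be negative. Concretely, writing $h = h_0 + h_1\qi + h_2\qj + h_3\qk$ at the level of the primal part (the dual part contributes terms that vanish after multiplication by the real condition, or can be handled by noting the relevant quantities are the real numbers $N$ and $T$ themselves), one has $T = 2h_0$ and $N = h_0^2 + h_1^2 + h_2^2 + h_3^2$ for the primal part, so $T^2 - 4N = -4(h_1^2 + h_2^2 + h_3^2) < 0$ exactly because the primal vectorial part is nonzero. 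This is the crux of the argument.

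The main obstacle I anticipate is bookkeeping with the dual part: $h = p + \eps q$ has a genuinely dual norm and trace in general, and I must use the defining hypothesis that for a \emph{rotation} quaternion these are \emph{strictly real}, so that $N$ and $T$ are honest real numbers and the discriminant computation takes place in $\R$. Once that is pinned down, the sign computation reduces to the primal part as above, since $h\qe{h}$ strictly real means the dual part of the norm vanishes and the primal part $p\qe{p}$ carries all the information; the condition $p \notin \R$ then gives $T^2 - 4N < 0$ and hence no real roots. I would close by remarking that this also shows $M$ is, up to a positive real factor, a sum of two squares in $\R[t]$, which is the form in which the lemma will be used later.
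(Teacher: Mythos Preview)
Your proposal is correct and follows essentially the same line as the paper's proof: expand $M = t^2 - (h+\qe{h})t + h\qe{h}$, use that the trace and norm of a rotation quaternion are strictly real to get $M \in \R[t]$, and then show the discriminant is negative because the primal vectorial part of $h$ is nonzero. The only cosmetic difference is that the paper packages the discriminant computation as the coordinate-free identity $(h+\qe{h})^2 - 4h\qe{h} = (h-\qe{h})^2$, the square of a dual quaternion with nonzero primal vectorial part and hence strictly negative, whereas you write it out in primal coordinates; your remarks about $t=\infty$ and about $M$ being a sum of two squares are harmless but unnecessary.
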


\begin{proof}
  The first claim follows from the expansion $M = t^2 - t(h+\qe{h}) +
  h\qe{h}$ and the observations that $h + \qe{h} \in \R$ (because
  the scalar dual part of $h$ is zero) and $h\qe{h} \in \R$ (because
  $h$ lies on the Study quadric).

  The discriminant of $M$ equals $\Delta = (h+\qe{h})^2 - 4h\qe{h} =
  (h-\qe{h})^2$. Since $h$ is a rotation quaternion, $\Delta$ is
  negative and $M$ has no real roots. (The case $\Delta = 0$
  characterizes translations.)
\end{proof}

\begin{prop}
  \label{prop:open}
  Let $P\in\D\HS[t]$ be a motion polynomial of degree $n>0$ without
  strictly real factors. If there is a factorization
  $P(t)=(t-h_1)\cdots(t-h_n)$ with rotation quaternions
  $h_1,\ldots,h_n\in\D\HS$, the polynomial $P\qe{P}\in\R[t]$ has no
  real zeroes.
\end{prop}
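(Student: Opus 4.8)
The plan is to exploit the multiplicativity of the norm together with Lemma~\ref{lem:real}. Observe first that for any dual quaternion $h$ one has $\qe{gh} = \qe{h}\,\qe{g}$, and that conjugation is an anti-automorphism of $\D\HS[t]$ that fixes $t$ (since $t$ is central). Hence from $P = (t-h_1)\cdots(t-h_n)$ we get
\begin{equation*}
  \qe{P} = (t-\qe{h_n})(t-\qe{h_{n-1}})\cdots(t-\qe{h_1}),
\end{equation*}
and therefore
\begin{equation*}
  P\qe{P} = (t-h_1)\cdots(t-h_n)(t-\qe{h_n})\cdots(t-\qe{h_1}).
\end{equation*}

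The key step is to collapse the two middle factors $(t-h_n)(t-\qe{h_n})$. By Lemma~\ref{lem:real} this product equals a polynomial $M_n \in \R[t]$ with no real roots; since $M_n$ has real coefficients it is central in $\D\HS[t]$, so it can be pulled out to the front (or left in place — it commutes with everything). Repeating this argument inductively — after removing $M_n$, the innermost surviving factors are $(t-h_{n-1})(t-\qe{h_{n-1}})$, which again collapse to a real factor $M_{n-1}$ with no real roots, and so on — we obtain
\begin{equation*}
  P\qe{P} = M_1 M_2 \cdots M_n, \qquad M_i := (t-h_i)(t-\qe{h_i}) \in \R[t],
\end{equation*}
a product of real quadratics, each without real zeroes by Lemma~\ref{lem:real}. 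A product of real polynomials none of which has a real root has no real root, which is exactly the claim.

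The only genuine subtlety — the step I'd expect to need the most care — is justifying the inductive "peeling off from the inside" rigorously: after extracting $M_n$ as a real (hence central) factor, one must argue that $P\qe{P}/M_n$ is again of the same shape $(t-h_1)\cdots(t-h_{n-1})(t-\qe{h_{n-1}})\cdots(t-\qe{h_1})$ so the induction hypothesis applies. This is immediate once one notes that $M_n$, being in $\R[t]$, lies in the center of $\D\HS[t]$ and that $\D\HS[t]$ has no zero divisors among monic polynomials of the relevant type (or simply that the product above was assembled from the explicit factors, so cancelling the central $M_n$ just deletes those two positions). The hypothesis that $P$ has no strictly real factors is not actually needed for this direction — it will matter for the converse/uniqueness discussion — so the proof is essentially the computation above plus Lemma~\ref{lem:real}.
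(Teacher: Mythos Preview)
Your proof is correct and follows essentially the same route as the paper: compute $\qe{P}$ via the anti-automorphism, collapse the nested product $P\qe{P}$ to $M_1\cdots M_n$ using that each $M_i=(t-h_i)(t-\qe{h_i})$ is real (hence central), and invoke Lemma~\ref{lem:real}. Your observation that the hypothesis ``without strictly real factors'' is not actually used in this direction is also accurate.
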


\begin{proof}
  Assume $P=(t-h_1)\cdots(t-h_n)$ and let $M_i:=(t-h_i)(t-\qe{h_i})$
  for $i=1,\dots,n$. By Lemma~\ref{lem:real}, $M_i$ is in $\R[t]$ and
  has no real roots. The same is true for
  $P\qe{P}=(t-h_1)\cdots(t-h_n)(t-\qe{h_n})\cdots(t-\qe{h_1})=M_1\cdots
  M_n$.
\end{proof}

We call a motion polynomial $P\in\D\HS[t]$ of degree $n > 0$
\emph{generic} if $P\qe{P}$ has $n$ distinct quadratic, irreducible
factors. This implies that $\primal(P)$ has no strictly real factors,
because any such factor would appear with multiplicity $\ge 2$ in the
factorization of~$P\qe{P}$.

\begin{thm}
  \label{thm:open}
  Let $P\in\D\HS[t]$ be a generic motion polynomial of degree $n > 0$.
  Then there exists a factorization $P(t)=(t-h_1)\cdots(t-h_n)$ with
  $h_i\in\D\HS$ representing rotations.
\end{thm}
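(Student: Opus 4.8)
The plan is to proceed by induction on the degree $n$, peeling off one linear right factor $(t - h_n)$ at a time. For $n = 1$ there is nothing to do, since $P = t - h_1$ is already of the required form and, being a monic motion polynomial of degree $1$, the quaternion $h_1$ has real norm and trace; genericity ($P\qe{P}$ irreducible quadratic) forces the primal vectorial part of $h_1$ to be nonzero, so $h_1$ is a rotation. For the inductive step I would look at the polynomial $N := P\qe{P} \in \R[t]$, which by assumption factors into $n$ distinct irreducible quadratics, say $N = M_1 \cdots M_n$ with $M_i = t^2 - b_i t + c_i$ and $\Delta_i = b_i^2 - 4c_i < 0$. The key idea is that each quadratic factor $M_i$ of $N$ should correspond to one linear factor of $P$: concretely, I want to divide $P$ on the right by one of the $M_i$, and the remainder (which has degree at most $1$) should vanish as a left multiple. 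More precisely, write $P = M_n \cdot Q + R$ in the ring $\D\HS[t]$, where $M_n \in \R[t]$ is central so left and right division agree; here $\deg R \le 1$. Because $M_n$ is central and divides $N = P\qe{P}$, one checks that $M_n$ must divide $R\qe{R}$, and since $\deg R\qe{R} \le 2 < \deg M_n^2$... — better: compute $P = B_n M_n' + \text{lower}$ ... I will instead extract the \emph{right} gcd.

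Here is the cleaner route I would take. Since the $M_i$ are pairwise coprime in $\R[t]$, I can form the polynomial $C_n := N / M_n = M_1 \cdots M_{n-1}$, of degree $2n - 2$. Consider $F := P \cdot \qe{P} / C_n = P \cdot (\qe{P}/C_n)$; wait, $C_n$ need not divide $\qe{P}$. The robust statement is this: compute the right remainder $R$ of $P$ modulo $M_n$, so $P \equiv R \pmod{M_n}$ with $R = u t + v$, $u, v \in \D\HS$. From $P\qe{P} \equiv R\qe{R} \equiv 0 \pmod{M_n}$ (using $M_n \mid N$ and $M_n$ central) we get that the real polynomial $R\qe{R}$ of degree $\le 2$ is divisible by the irreducible quadratic $M_n$, hence $R\qe{R} = \lambda M_n$ for some $\lambda \in \R$, and $\lambda \ne 0$ because $M_n$ is irreducible while $R\qe{R}$ would otherwise have to be the zero polynomial — and $R \ne 0$ since $P$ has no strictly real factors, so in particular $M_n \nmid P$. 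Now I set $h_n := -R^{-1} v \cdot$ ... — rather, observe $R = u(t - (-u^{-1}v)) = u(t - h_n)$ with $h_n := -u^{-1} v$ (here $u$ is invertible in $\D\HS$ precisely when its norm is a unit in $\D$, which follows from $u \qe u$ being, up to the nonzero scalar $\lambda$, the leading coefficient of $M_n$, i.e.\ $1$). Then $(t - h_n)(t - \qe{h_n}) = u^{-1} R \qe R \qe u^{-1} \cdot (\text{scalar})= \lambda' M_n$, so $h_n + \qe{h_n}, h_n\qe{h_n} \in \R$ and $\Delta = (h_n - \qe{h_n})^2 < 0$ (inherited from $\Delta_n < 0$), whence $h_n$ is a rotation quaternion by the criterion recalled in the Preliminaries.

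It remains to check that $P_{n-1} := P \cdot (t - h_n)^{-1}$, a priori a rational function, is actually a polynomial of degree $n - 1$ to which the inductive hypothesis applies. Since $P \equiv u(t - h_n) \pmod{M_n}$ and $M_n = (t - h_n)(t - \qe{h_n})$, we get $(t-h_n) \mid P$ on the right in $\D\HS[t]$; the quotient $P_{n-1}$ is monic of degree $n-1$, and $P_{n-1}\qe{P_{n-1}} = N / M_n = M_1 \cdots M_{n-1}$, which is again a product of $n-1$ distinct irreducible quadratics — so $P_{n-1}$ is again a generic motion polynomial without strictly real factors. Applying the inductive hypothesis to $P_{n-1}$ yields $P_{n-1} = (t - h_1)\cdots(t-h_{n-1})$ with $h_1, \dots, h_{n-1}$ rotations, and then $P = (t-h_1)\cdots(t-h_{n-1})(t-h_n)$ as desired. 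The main obstacle — and the step I would write out most carefully — is the invertibility of the leading coefficient $u$ of the remainder $R$ together with the claim $R\qe R = \lambda M_n$ with $\lambda \neq 0$: this is where genericity (distinctness and irreducibility of the quadratic factors of $P\qe P$) is used in an essential way, ruling out the degenerate possibilities $\deg R = 0$ or $R\qe R \equiv 0$, and it is also the point at which one must be slightly careful that all the relevant norms are units in the dual numbers $\D$ rather than merely nonzero, so that division in $\D\HS$ is legitimate.
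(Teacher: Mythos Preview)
Your approach is essentially the same as the paper's: induct on $n$, pick an irreducible quadratic factor $M$ of $P\qe{P}$, reduce $P$ modulo $M$ to a linear remainder $R = ut+v$, extract the right linear factor $t-h$ with $h=-u^{-1}v$, and recurse on the quotient. The paper packages the extraction step into a separate lemma (Lemma~\ref{lem:root}) and then invokes Lemma~\ref{lem:factor} to conclude $(t-h)\mid P$, whereas you carry it out inline and obtain the divisibility directly via $P = Q(t-\qe{h})(t-h) + u(t-h)$; these are the same argument.

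The one genuine gap is precisely the point you flag at the end, and your written argument for it is not yet correct. You assert $R\qe{R}=\lambda M_n$ with $\lambda\in\R$, but a priori $\lambda\in\D$ only (the norm of a dual quaternion is a dual number). You then argue $\lambda\ne 0$ because ``$R\ne 0$ since $P$ has no strictly real factors''; but over $\D\HS$ the implication $R\ne 0\Rightarrow R\qe{R}\ne 0$ fails (take $R=\eps\qi$). What you actually need is that the \emph{primal part} of $\lambda$ is nonzero, so that $\lambda$ is a unit in $\D$ and $u$ is invertible. The paper gets this from genericity: since the $M_i$ are distinct irreducible quadratics, $\primal(P)$ has no strictly real factor, hence $M_n\nmid\primal(P)$, hence $\primal(R)\ne 0$; and over $\HS$ (which has no zero divisors) this gives $\primal(R)\overline{\primal(R)}\ne 0$, i.e.\ $\primal(\lambda)\ne 0$. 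Once you insert this step, your proof is complete and coincides with the paper's.
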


We divide the proof of Theorem~\ref{thm:open} into several lemmas. A
central tool (also for the computation of the factorization) is
polynomial division in~$\D\HS[t]$.

\begin{lem}[polynomial division]
  \label{lem:div}
  Let $P_1,P_2\in\D\HS[t]$ and assume $P_2$ is monic. Then there
  exists a unique representation $P_1=QP_2+R$ with $Q,R \in \D\HS[t]$
  and $\deg(R)<\deg(P_2)$. Moreover, if $h\in\D\HS$ such that
  $P_2(h)=0$, then $P_1(h)=R(h)$.
\end{lem}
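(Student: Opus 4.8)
The plan is to run the classical Euclidean division algorithm, the crucial observation being that since $P_2$ is monic its leading coefficient is the unit $1$, so the zero divisors present in $\D\HS$ (e.g.\ $\eps$) never get in the way. Recall also that in $\D\HS[t]$ the indeterminate $t$ is central, so left-multiplying a polynomial by $c t^k$ just multiplies each coefficient on the left by $c$ and shifts degrees by $k$; this is the only bookkeeping needed.

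\emph{Existence.} I would induct on $m := \deg(P_1)$, writing $n := \deg(P_2)$. If $P_1 = 0$ or $m < n$, take $Q = 0$ and $R = P_1$. If $m \ge n$, let $a \in \D\HS$ be the leading coefficient of $P_1$ and set $P_1' := P_1 - a t^{m-n} P_2$. Because $P_2$ is monic, $a t^{m-n} P_2$ has leading term $a t^m$, which cancels the leading term of $P_1$, so $\deg(P_1') < m$. By the induction hypothesis $P_1' = Q' P_2 + R'$ with $\deg(R') < n$, whence $P_1 = (a t^{m-n} + Q') P_2 + R'$ is the desired representation.

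\emph{Uniqueness.} Suppose $Q P_2 + R = \widetilde{Q} P_2 + \widetilde{R}$ with $\deg(R), \deg(\widetilde{R}) < n$, so that $(Q - \widetilde{Q}) P_2 = \widetilde{R} - R$. If $Q \neq \widetilde{Q}$, let $b \neq 0$ be the leading coefficient of $Q - \widetilde{Q}$; since $P_2$ is monic, the leading coefficient of $(Q - \widetilde{Q}) P_2$ is $b \cdot 1 = b \neq 0$, so the left-hand side has degree $\ge n$, contradicting $\deg(\widetilde{R} - R) < n$. Hence $Q = \widetilde{Q}$ and then $R = \widetilde{R}$.

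\emph{Evaluation.} Here one must not treat $t \mapsto h$ as a ring homomorphism, since it is not over a noncommutative ring; instead, for $A = \sum_i a_i t^i$ and any $B \in \D\HS[t]$, centrality of $t$ gives $A(t)B(t) = \sum_i a_i B(t) t^i$, and evaluating at $h$ (using that powers of $h$ commute among themselves) yields
\[
  (AB)(h) \;=\; \sum_i a_i\, B(h)\, h^i .
\]
In particular, $B(h) = 0$ forces $(AB)(h) = 0$. Since evaluation is obviously additive, applying this to $P_1 = Q P_2 + R$ with $P_2(h) = 0$ gives $P_1(h) = (Q P_2)(h) + R(h) = R(h)$. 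I expect no real obstacle in this lemma; the only point demanding care is exactly this last one — substituting from the correct side so that the powers $h^i$ coming from the left factor are peeled off to the \emph{right} of $B(h)$.
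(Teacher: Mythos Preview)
Your proof is correct and follows essentially the same approach as the paper: Euclidean division by induction on the degree of $P_1$, the same degree-comparison argument for uniqueness, and the same key observation for evaluation that $(QP_2)(h) = \sum_i q_i\,P_2(h)\,h^i$ vanishes when $P_2(h)=0$. Your existence step is actually slightly tidier than the paper's (one application of the induction hypothesis rather than two), but the content is identical.
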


\begin{proof}
  The proof is a generalization of polynomial division to the
  non-com\-mu\-ta\-tive case. Let $n:=\deg(P_1)$ and $m:=\deg(P_2)$.
  Existence of the representation is trivial if $n<m$ ($Q = 0$, $R =
  P_1$). Assume inductively that polynomial division is possible for
  all polynomials of degree less than $n$. We can write $P_1=at^n+P_3$
  with $\deg(P_3)<n$, and $at^n=at^{n-m}P_2+P_4$ with $\deg(P_4)<n$.
  By the induction hypothesis, we have $P_3=Q_3P_2+R_3$ and
  $P_4=Q_4P_2+R_4$ with $\deg(R_3),\deg(R_4)<m$. Combining, we obtain
  $P_1=(at^{n-m}+Q_4+Q_3)P_2+R_3+R_4$. This shows existence.

  Now, assume that $P_1=Q_1P_2+R_1=Q_2P_2+R_2$. Then
  $(Q_1-Q_2)P_2=R_1-R_2$. If $Q_1 \neq Q_2$, the polynomial on the
  left has degree greater or equal $m$ while the polynomial on the
  right has degree less than $m$. This is impossible, so that $Q_1 =
  Q_2$ and $R_1 = R_2$. This shows uniqueness.

  For the last statement, we have to show that $P_2(h)=0$ implies
  $(QP_2)(h)=0$. This is not trivial, because $h$ does not commute
  with the coefficients of $P_2$. But the statement is linear in $Q$,
  hence it is enough to prove it for monomials: If $P_2(h) = 0$ and
  $Q=at^r$, then $(QP_2)(h)=aP_2(h)h^r=0$.
\end{proof}

\begin{lem}
  \label{lem:factor}
  Let $P\in\D\HS[t]$ and $h\in\D\HS$. Then $(t-h)$ is a right factor
  of $P$ if and only if $P(h)=0$.
\end{lem}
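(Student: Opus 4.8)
The plan is to deduce both implications from the polynomial division lemma (Lemma~\ref{lem:div}), applied with the monic divisor $P_2 = t-h$ of degree~$1$. For any $P \in \D\HS[t]$ this yields a representation $P = Q(t-h) + R$ with $\deg(R) < 1$, so that $R$ is a constant element of $\D\HS$. Since a constant polynomial evaluates to itself, $R(h) = R$; and because $P_2(h) = h - h = 0$, the last statement of Lemma~\ref{lem:div} gives $P(h) = R(h) = R$. This identity is the crux: it packages the noncommutative evaluation issue so that the rest is a one-line uniqueness argument.

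For the ``if'' direction I would argue as follows. Assume $P(h) = 0$. From $P = Q(t-h) + R$ with $R \in \D\HS$ constant, and $P(h) = R$, I get $R = 0$, hence $P = Q(t-h)$, i.e.\ $(t-h)$ is a right factor. For the ``only if'' direction, assume $P = Q(t-h)$. Then $P = Q(t-h) + 0$ is itself a division representation of the form guaranteed by Lemma~\ref{lem:div}, so by the uniqueness part of that lemma the remainder is $R = 0$; applying the evaluation statement again gives $P(h) = R(h) = 0$.

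I do not expect a genuine obstacle here, since all the real content has already been established in Lemma~\ref{lem:div}. The only point that needs care — and the one place a careless proof could go wrong — is that one must \emph{not} invoke a substitution homomorphism $P \mapsto P(h)$, which is ill-defined over a noncommutative ring; instead one uses the evaluation clause of Lemma~\ref{lem:div} as a black box. A minor remark worth recording is that a degree-$0$ polynomial does evaluate to its constant coefficient regardless of noncommutativity, which is what legitimizes the step $R(h) = R$.
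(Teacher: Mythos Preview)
Your argument is correct and follows exactly the route the paper takes: apply Lemma~\ref{lem:div} with $P_2=t-h$ to obtain $P=Q(t-h)+R$ with constant $R=P(h)$, after which both implications are immediate. The paper's proof is a one-sentence version of yours; your added care about avoiding a substitution homomorphism and invoking uniqueness for the ``only if'' direction is sound but not strictly needed, since the evaluation clause of Lemma~\ref{lem:div} (equivalently, $(Q(t-h))(h)=0$) already gives that direction directly.
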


\begin{proof}
  By Lemma~\ref{lem:div}, there is a unique representation
  $P=Q(t-h)+R$, and $R$ is a constant equal to $P(h)$.
\end{proof}

\begin{lem}
  \label{lem:root}
  Let $P\in\D\HS[t]$ be a motion polynomial. Let $M\in\R[t]$ be a
  monic polynomial of degree two that divides $P\qe{P}$ but not
  $\primal(P)$. Then there exists a unique $h\in\D\HS$ such that
  $P(h)=M(h)=0$.
\end{lem}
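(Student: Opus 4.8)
The plan is to work with the quadratic factor $M \in \R[t]$ and use polynomial division to locate the desired $h$. First I would divide $P$ by $M$ in $\D\HS[t]$. Since $M$ is monic and central (it lies in $\R[t]$, which is in the center of $\D\HS[t]$), Lemma~\ref{lem:div} gives a unique representation $P = QM + R$ with $\deg R < 2$, say $R = ct + d$ with $c,d \in \D\HS$. The claim will be that $R$ has a unique root $h$, and that this root automatically satisfies $M(h) = 0$ as well (the latter because $P(h) = R(h)$ by the last part of Lemma~\ref{lem:div}, and then $M(h) = 0$ follows since $P = QM + R$ evaluated at $h$ — but one has to be careful that evaluation is not a ring homomorphism; I would instead argue $M(h)=0$ directly from the structure of $R$).

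The key computation is to understand $R$ and its conjugate. Writing $P\qe{P} = M N$ with $N \in \R[t]$, and using that $M$ divides $P\qe{P}$, I would show $R\qe{R} \equiv 0 \pmod{M}$; since $\deg(R\qe{R}) \le 2$ and $R\qe{R} \in \D[t]$ is a multiple of the monic degree-two polynomial $M$, this forces $R\qe{R} = \lambda M$ for some $\lambda \in \D$. The norm $\lambda$ is nonzero: if $\lambda = 0$ then $R\qe{R} = 0$, and writing $R = ct+d$ this would make $c\qe c = c\qe d + d \qe c = d\qe d = 0$; combined with the hypothesis that $M \nmid \primal(P)$ (so $R$ is "large enough", i.e.\ $\primal(R) \neq 0$) one derives a contradiction — the primal part of $R$ would be a left polynomial over $\HS$ with vanishing norm, hence either zero or with a strictly real factor dividing $\primal(P)$ in a way that conflicts with $M \nmid \primal(P)$. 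This is the step I expect to be the main obstacle: pinning down exactly why $\lambda \neq 0$, and handling the dual-number subtleties (a dual number can have "zero norm" while being nonzero, so one must track primal and dual parts separately).

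Once $\lambda \in \D$ is invertible, I would solve $R(h) = 0$. From $R = ct + d$ and $R\qe R = \lambda M$ one extracts, comparing coefficients, that $c\qe c = \lambda$, $d\qe d = \lambda M(0)$, and $c\qe d + d\qe c = -\lambda M_1$ where $M = t^2 + M_1 t + M_0$. Since $\lambda = c\qe c$ is invertible, $c$ is invertible in $\D\HS$, and the unique candidate root is $h = -c^{-1}d$. Uniqueness of $h$ is immediate from uniqueness of the root of the linear polynomial $R$ (a monic linear polynomial $t - h$ over $\D\HS$ has exactly one root). Finally I would verify $M(h) = 0$: from $h = -c^{-1}d$ one computes $h\qe h = c^{-1} d \qe d \qe c^{-1} = c^{-1}(\lambda M_0)\qe{c}^{-1}$; using $\lambda = c\qe c$ and that $\lambda$ is central this simplifies to $M_0$, and similarly $h + \qe h = M_1$ follows from the mixed-term relation, so $M(h) = h^2 + M_1 h + M_0 = h^2 - (h+\qe h)h + h\qe h = 0$. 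This also shows $h$ has real norm and trace, so it lies on the Study quadric; whether it represents a rotation (rather than a translation) will follow from $M$ having no real roots, i.e.\ negative discriminant, via the discriminant computation in Lemma~\ref{lem:real} read backwards — but that refinement belongs to the proof of Theorem~\ref{thm:open} rather than to this lemma.
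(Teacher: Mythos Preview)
Your approach is essentially the paper's: divide $P$ by $M$ to get a linear remainder $R=r_1t+r_0$, observe $R\qe{R}=\lambda M$ for some $\lambda\in\D$, show the leading coefficient of $R$ is invertible, take $h=-r_1^{-1}r_0$, and check $M(h)=0$ and $P(h)=R(h)=0$. Your explicit verification of $M(h)=0$ via $h\qe h=M_0$ and $h+\qe h=-M_1$ (note the sign: with $M=t^2+M_1t+M_0$ the mixed term gives $c\qe d+d\qe c=\lambda M_1$, hence $h+\qe h=-M_1$) is equivalent to the paper's shortcut $R\qe R=r_1(t-h)(t-\qe h)\qe{r_1}=\lambda(t-h)(t-\qe h)$, which immediately gives $M=(t-h)(t-\qe h)$.

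The one place your sketch does not close is exactly the one you flag: you argue $\lambda\neq 0$, but in $\D$ this does not give invertibility. What you need is $\primal(\lambda)\neq 0$. The paper's argument is clean and short: if $\primal(\lambda)=0$, then $\primal(R\qe R)=\primal(R)\,\qe{\primal(R)}=0$ in $\HS[t]$; since $\HS$ is a division ring, $\HS[t]$ has no zero divisors, so $\primal(R)=0$, contradicting $M\nmid\primal(P)$. Once $\primal(\lambda)\neq 0$, the leading coefficient $r_1$ also has nonzero primal part (else $r_1\qe{r_1}=0=\lambda$), hence is invertible, and the rest of your argument goes through.
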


\begin{proof}
  By Lemma~\ref{lem:div}, we may write $P=QM+R$ with $Q,R\in\D\HS[t]$,
  $\deg(R)<2$. Since $M$ does not divide $\primal(P)$, we have
  $\primal(R) \neq 0$. On the other hand,
  \begin{equation*}
    P\qe{P}=(QM+R)(M\qe{Q}+\qe{R})=(Q\qe{Q}M+Q\qe{R}+R\qe{Q})M+R\qe{R}.
  \end{equation*}
  Because $M$ divides $P\qe{P}$, we conclude $R\qe{R}=cM$ for some
  $c\in\D$, $c\ne 0$. Assume that the primal part of $c$ is zero, that
  is, $R\qe{R}=\eps kM$ with $k\in \HS$. This implies $\primal(R) = 0$
  and contradicts our assumption. Hence $\primal(c) \neq 0$. The
  leading coefficient $r_1$ of $R$ cannot have zero primal part
  because otherwise we get the contradiction $r_1\qe{r}_1 = 0 = c$.
  Hence, $r_1$ is invertible in $\D\HS$ and, because of
  $r_1\qe{r}_1=c$, so is $c$. The polynomial $R = r_1t + r_0$ is
  linear with invertible leading coefficient. Hence it has a unique
  zero $h = -r_1^{-1}r_0 \in \D\HS$. From $R(h)=0$, we obtain
  $cM(h)=0$ and, because $c$ is invertible, $M(h)=P(h)=0$. This shows
  existence.

  In order to show uniqueness, assume there exists $h' \in \D\HS$ with
  $P(h')=M(h')=0$. This implies $R(h')=0$. But then $h' = h$ because
  the zero of $R$ is unique.
\end{proof}

\begin{proof}[Proof of Theorem~\ref{thm:open}:]
  We proceed by induction on $n$. For $n=0$, the statement is trivial.
  Assume $n\ge 1$. Since the primal part of $P$ has no strictly real
  factors, $P$ itself has no strictly real factors. Consequently,
  $P\qe{P}$ has no linear factors.

  Let $M$ be one of the irreducible quadratic factors of $P\qe{P}$. By
  Lemma~\ref{lem:root}, there is a unique $h$ such that $M(h)=P(h)=0$.
  By Lemma~\ref{lem:factor}, there exists $Q\in\D\HS[t]$ such that
  $P=Q(t-h)$. Obviously, $Q$ is monic of degree $n-1$. Moreover, we
  have $P\qe{P}=Q(t-h)(t-\qe{h})\qe{Q}=Q\qe{Q}M$ so that $Q\qe{Q}$ is
  in $\R[t]$. Furthermore, $Q$ cannot have a strictly real factor:
  This factor would also be a left factor and hence divide $P$. For
  similar reasons, the primal part of $Q$ cannot have strictly real
  factor (it would divide $\primal(P)$). By induction hypothesis, we
  obtain $Q=(t-h_1)\cdots(t-h_{n-1})$ and so
  $P=(t-h_1)\cdots(t-h_{n-1})(t-h)$. Because of $P\qe{P} \in \R[t]$,
  $h$ must be a rotation or translation quaternion. By genericity of
  $P$, it is actually a rotation quaternion.
\end{proof}

\begin{rem}
  Theorem~\ref{thm:open} is almost a converse to
  Proposition~\ref{prop:open}. But there are polynomials $P$ without
  real factors such that $P\qe{P}\in\R[t]$ where the proposition and
  the theorem do not say anything. In this case, there exists an
  irreducible polynomial $R\in\R[t]$ dividing the primal part of $P$
  but not the dual part. Since $R$ then also divides the primal part
  of $\qe{P}$ and $P\qe{P}\in\R[t]$, the factor $R$ appears with
  multiplicity two in the factorization of $P\qe{P}$. For instance, if
  $P=t^2+1+\eps \qi$, it can be shown that $P$ is not the product of
  two linear rotation polynomials. (Here, we assume that
  $(1,\qi,\qj,\qk)$ is the standard basis of the quaternion algebra
  $\HS$.) On the other hand, $P=t^2+1+\eps\qj t +
  \eps\qi=(t-\qk)(t-\qk+\eps\qj)$ is a product of two rotation
  polynomials. A systematic analysis would be good, but it is probably
  more difficult. At this place, we only observe that $P=t^2+1+\eps
  \qi$ is a quadratic parametrization of a straight line.
\end{rem}


\subsection{Computing the factorization}
\label{sec:computing}

Our proof of Theorem~\ref{thm:open} is constructive and, with
exception of the factorization of $P\qe{P}$, can be implemented in
rational arithmetic. We describe this in more detail. Our
considerations will lead us to two new insights:
\begin{itemize}
\item The factorization depends on an ordering of the $n$ quadratic
  factors of the polynomial $P\qe{P}$. Hence, there exist $n!$
  different factorizations and Theorem~\ref{thm:open} actually admits
  a stronger version (Theorem~\ref{thm:number}, below).
\item If one factorization of $P$ is known, the remaining
  factorizations can be computed in rational arithmetic. In
  particular, it is possible to construct completely rational examples
  by starting with the polynomial $P = (t-h_1)\cdots(t-h_n)$ where
  $h_1,\ldots,h_n$ are rotation quaternions with rational
  coefficients.
\end{itemize}

Given is a motion polynomial $P \in \D\HS[t]$. We want to compute
rotation quaternions $h_1,\ldots,h_n$ such that $P = (t-h_1) \cdots
(t-h_n)$. The pseudo-code for this calculation is given in
Algorithm~\ref{alg:factor}. It returns an $n$-tuple $H =
(h_1,\ldots,h_n)$. Here are some remarks on the actual calculations:

\begin{algorithm}[tbp]
  \caption{Compute factorization $P = (t-h_1)\ldots(t-h_n)$}
  \label{alg:factor}
  \begin{algorithmic}[1]
    \REQUIRE{generic motion polynomial $P$}
    \STATE{$M \leftarrow \{M_1,\ldots,M_n\}$ where $P\qe{P} = M_1 \cdots M_n$.}
    \STATE{Let $H$ denote the empty tuple.}
    \REPEAT
      \STATE{Choose $M_i \in M$ and set $M \leftarrow M \setminus \{M_i\}$.}
      \STATE{Compute $h_i$ such that $M_i(h_i) = P(h_i) = 0$.}
      \STATE{Append $h_i$ to $H$.}
      \STATE{$P \leftarrow P / (t-h_i)$}
    \UNTIL{$\deg P = 0$}
    \RETURN{$H = (h_1,\ldots,h_n)$}
  \end{algorithmic}
\end{algorithm}

\begin{itemize}
\item In Line~4, the choice of $M_i \in M$ is arbitrary. Thus,
  Algorithm~\ref{alg:factor} is not deterministic.
\item The common root $h_i$ of $M_i$ and $P$ in Line~5 is found by
  computing the remainder $R = r_1t + r_0$ of the polynomial division
  of $P$ by $M_i$ and setting $h_i = -r_1^{-1}r_0$. This is justified
  in the proof of Lemma~\ref{lem:root}.
\item The proof of Theorem~\ref{thm:open} shows that the polynomial
  division of $P$ by $(t-h_i)$ is possible without remainder.
  Moreover, the quotient satisfies all requirements of
  Theorem~\ref{thm:open} so that it is suitable as input for yet
  another repeat-loop.
\end{itemize}

Maple source code for Algorithm~1 can be found on the web-site
\url{http://geometrie.uibk.ac.at/schroecker/qf/}. The
non-deterministic nature of Algorithm~\ref{alg:factor} implies
existence of different factorizations into products of rotation
quaternions.

\begin{thm}
  \label{thm:number}
  Let $P\in\D\HS[t]$ be a generic motion polynomial of degree $n>0$.
  Then there is a one-to-one correspondence between factorizations
  $P(t)=(t-h_1)\cdots(t-h_n)$ into linear polynomials over $\D\HS$ and
  permutations of the $n$ distinct quadratic irreducible factors
  $M_1,\dots,M_n$ of $P\qe{P}$. In any of these factorizations we have
  $M_i(h_i)=0$ for $i=1,\dots,n$.
\end{thm}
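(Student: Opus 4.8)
The plan is to establish the bijection by producing a map in each direction and checking they are mutually inverse.

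First I would construct a map from factorizations to permutations. Given a factorization $P = (t-h_1)\cdots(t-h_n)$, set $M_i := (t-h_i)(t-\qe{h_i})$. By Lemma~\ref{lem:real} each $M_i$ lies in $\R[t]$ and has no real roots, and as in the proof of Proposition~\ref{prop:open} we get $P\qe{P} = M_1\cdots M_n$. Since the $M_i$ all lie in the commutative ring $\R[t]$, this is genuinely a factorization into quadratics there; by genericity $P\qe{P}$ has $n$ distinct irreducible quadratic factors, so by unique factorization in $\R[t]$ the multiset $\{M_1,\dots,M_n\}$ coincides with the multiset of irreducible factors of $P\qe{P}$, and the $M_i$ are pairwise distinct. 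Hence the factorization determines an ordering, i.e.\ a permutation, of the fixed set of irreducible quadratic factors. I also need the auxiliary fact $M_i(h_i) = 0$: this is immediate since $h_i$ is a root of $(t-h_i)$ hence of $(t-h_i)(t-\qe{h_i})$ evaluated at $h_i$ — more carefully, using Lemma~\ref{lem:div}, $(t-h_i)$ is a right factor of $M_i$ so $M_i(h_i) = 0$.

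Next I would construct the inverse map: from a permutation (i.e.\ an ordering $M_1,\dots,M_n$ of the irreducible quadratic factors) back to a factorization. This is exactly what Algorithm~\ref{alg:factor} does, but I would phrase it as an induction mirroring the proof of Theorem~\ref{thm:open}: given the ordered list, Lemma~\ref{lem:root} applied to $M_1$ (which divides $P\qe{P}$ but not $\primal(P)$, by the genericity argument already given) yields a unique $h_1$ with $M_1(h_1) = P(h_1) = 0$; then Lemma~\ref{lem:factor} gives $P = Q(t-h_1)$ with $Q$ monic of degree $n-1$, and the proof of Theorem~\ref{thm:open} shows $Q\qe{Q} = M_2\cdots M_n$ and that $Q$ inherits all the genericity hypotheses. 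Recursing on $Q$ with the remaining ordered factors $M_2,\dots,M_n$ produces $h_2,\dots,h_n$ and the factorization $P = (t-h_1)\cdots(t-h_n)$. The key point is that each $h_i$ here is \emph{uniquely} determined by the choice of $M_i$ at that stage (Lemma~\ref{lem:root}), so the whole construction is a well-defined function of the permutation.

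Finally I would check the two maps are inverse to each other. Starting from a permutation, running the construction, and then forming $(t-h_i)(t-\qe{h_i})$: I must verify this recovers the same $M_i$. This follows because at stage $i$ the current polynomial $P_i$ satisfies $P_i\qe{P_i} = M_i M_{i+1}\cdots M_n$ and $P_i = Q_i(t-h_i)$, whence $M_i M_{i+1}\cdots M_n = Q_i\qe{Q_i}\,(t-h_i)(t-\qe{h_i})$; comparing with $P_i\qe{P_i} = Q_i\qe{Q_i} M_i$ and cancelling in the integral domain $\R[t]$ gives $(t-h_i)(t-\qe{h_i}) = M_i$. Conversely, starting from a factorization, passing to its permutation, and re-running the construction recovers the same $h_i$: at each stage $h_i$ is the unique element with $M_i(h_i) = P_i(h_i) = 0$, and the original $h_i$ satisfies both equations (the second because $(t-h_i)$ is a right factor of $P_i$), so uniqueness in Lemma~\ref{lem:root} forces agreement. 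The statement $M_i(h_i) = 0$ in the theorem then holds in every factorization, as noted above. The main obstacle is bookkeeping: one must carefully track that the genericity and ``no strictly real factor'' hypotheses are preserved at each recursion step (already handled in the proof of Theorem~\ref{thm:open}) and that the cancellations take place in $\R[t]$, where unique factorization and the integral-domain property are available; none of this is deep, but it is where the argument could go wrong if stated loosely.
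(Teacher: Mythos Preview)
Your approach is essentially the paper's own---the paper's proof is a two-sentence sketch asserting that the non-deterministic construction behind Theorem~\ref{thm:open} is complete---but you spell out the bijection and its inverse carefully, which is an improvement in rigor.

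There is one bookkeeping slip worth fixing. In your permutation-to-factorization direction you apply Lemma~\ref{lem:root} to $M_1$ first and obtain $P = Q(t-h_1)$, i.e.\ $h_1$ is split off as a \emph{right} factor; recursing on $Q$ with $M_2,\dots,M_n$ then yields $P = (t-h_n)(t-h_{n-1})\cdots(t-h_1)$, not $(t-h_1)\cdots(t-h_n)$ as you write. Consequently your two maps compose to the order-reversing involution on permutations rather than the identity. The repair is trivial---process the ordered list in reverse, peeling off $M_n$ first so that $h_n$ is the rightmost factor---and with that adjustment your verification that the maps are mutual inverses goes through exactly as you describe.
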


\begin{proof}
  Our proof of Theorem~\ref{thm:open} can be translated into a
  construction of a factorization of $P$ into linear factors over
  $\D\HS$. The only non-deterministic step is the choice of a
  quadratic factor of $P'\qe{P'}$, where $P'$ is the left factor from
  which the next right linear factor is going to be constructed. The
  construction is also complete in the sense that every factorization
  can be obtained by this non-deterministic algorithm.
\end{proof}

\begin{example}
  \label{ex:1}
  We give an example that illustrates some of the results we obtained
  so far. Consider the quadratic polynomial
  \begin{equation*}
    P = t^2 -
    t(1 + (\eps-1)\qi + (1-\eps)\qj + 2(1+\eps)\qk) -
    1 - 2\eps + \qi - \eps\qj + (2 - \eps)\qk.
  \end{equation*}
  We can write $P\qe{P} = M_1M_2$ where
  \begin{equation*}
    M_1 = t^2+2
    \quad\text{and}\quad
    M_2 = t^2-2t+3.
  \end{equation*}
  In order to find the common root $h_2$ of $P$ and $M_1$, we compute
  the remainder of the polynomial division of $P$ by $M_1$. It is $R =
  r_1t + r_0$ where
  \begin{equation*}
    r_1 = -1 + (1-\eps)\qi + (\eps-1)\qj - 2(1+\eps)\qk,\quad
    r_0 = -3 - 2\eps + \qi - \eps\qj + (2-\eps)\qk.
  \end{equation*}
  Now
  \begin{equation*}
    h_2 = -r_1^{-1}r_0 = \Bigl(\frac{-4}{7}+\frac{30}{49}\eps\Bigr)\qi -
                        \Bigl(\frac{1}{7}+\frac{3}{49}\eps\Bigr)\qj +
                        \Bigl(\frac{9}{7}+\frac{13}{49}\eps\Bigr)\qk
  \end{equation*}
  and, by polynomial division, we find $P = (t-h_1)(t-h_2)$ with
  \begin{equation*}
    h_1 = 1 +
          \Bigl(\frac{-3}{7}+\frac{19}{49}\eps\Bigr)\qi +
          \Bigl(\frac{8}{7}+\frac{-46}{49}\eps\Bigr)\qj +
          \Bigl(\frac{5}{7}+\frac{85}{49}\eps\Bigr)\qk.
  \end{equation*}
  Note that $P(h_2) = M_1(h_2) = 0$.

  Using $M_2$ instead of $M_1$, we can compute a second factorization
  $P = (t-h'_1)(t-h'_2)$ where
  \begin{equation*}
    \begin{aligned}
      h'_1 &= (1-\eps)\qj + (1+\eps)\qk,\\
      h'_2 &= 1-(1-\eps)\qi + (1+\eps)\qk.
    \end{aligned}
  \end{equation*}
  Here, $P(h'_2) = M_2(h'_2) = 0$.
\end{example}

We already showed how to compute the rotation quaternion $h_i$ from
the factor $M_i$. But it is also possible to compute $M_i$ from $h_i$.
Clearly, $M_i$ is the unique monic polynomial of degree two such that
$M_i(h_i) = 0$ \emph{(the minimal polynomial of $h_i$).} It is given
by $M_i = t^2 - (h_i+\qe{h}_i)t + h_i\qe{h}_i$. This observation is
important, because it allows us to construct completely rational
examples. Setting $P = (t-h_1)\cdots(t-h_n)$ with
$h_1,\ldots,h_n\in\D\HS$ we can directly compute the minimal
polynomials $M_1,\ldots,M_n$ and also the factorization in Line~1 of
Algorithm~\ref{alg:factor}. The polynomials in our examples were
actually obtained in this way.

\subsection{Kinematic interpretation}
\label{sec:interpretation}

Now we are going to translate Theorem~\ref{thm:open} into the language
of kinematics. A precise formulation takes into account the
possibility of neighboring factors $t-h_i$ and $t-h_{i+1}$ that
describe rotations about the same axis. In this case, we call $h_i$
and $h_{i+1}$ \emph{compatible.} This is the case if and only if
$h_ih_{i+1} = h_{i+1}h_i$.

Let $h\in\D\HS$ be a dual quaternion representing a rotation. The
parametrization $(t-h)_{t\in\PS^1}$ of the rotation group defined by
$h$ is called a linear parametrization. More generally, let
$R_1,R_2\in\R[t]$ such that $R_1$ is monic, $\deg(R_1)=n$,
$\deg(R_2)<n$, without common factor. Then the parametrization
$(R_1(t)-h_1R_2(t))_{t\in\PS^1}$ is called a rational parametrization
of degree $n$. Higher degree parametrizations of rotation groups may
arise as the product of linear parametrizations, if the two axes
coincide, that is, if the rotation quaternions are compatible.

A rational curve $C\subset S$ of degree $n$ in the Study quadric
admits a parametrization by a motion polynomial $P$ of degree $n$. We
say that $C$ is a generic rational curve of degree $n$ in the Study
quadric if $P$ is generic (its norm polynomial has $n$ distinct
irreducible quadratic factors). It is straightforward to show that
this notion of genericity of $C$ is well-defined, i.e., it does not
depend on the choice of the motion polynomial~$P$.

\begin{cor} 
  \label{cor:open-chain}
  Let $C\subset S$ be a generic rational curve of degree $n$ in the
  Study quadric, passing through $1$. Then $C$ can be obtained as
  movement of the last link of an open $k$R-linkage, with $k\le n$.
  The rotations in the $k$ joints have a simultaneous rational
  parametrization, and the sum of the degree of these parametrizations
  equals~$n$.
\end{cor}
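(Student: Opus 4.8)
The plan is to read the corollary off Theorem~\ref{thm:open} through the kinematic dictionary of Section~\ref{sec:interpretation}. First I would fix a monic generic motion polynomial $P\in\D\HS[t]$ of degree $n$ whose associated map $f_P$ parametrizes $C$. Such a polynomial exists: a degree-$n$ curve $C\subset S$ has a monic motion polynomial parametrization of degree $n$, and since $C$ passes through $1$ I may choose a parameter value where $f_P$ equals $1$ and carry it to $\infty$ by a real Möbius reparametrization of $\PS^1$, so that after rescaling $P$ is monic with $f_P(\infty)=1$; neither a real rescaling nor a Möbius reparametrization affects the factor structure of $P\qe{P}$, so $P$ is generic because $C$ is. In particular $P$ has no strictly real factors, hence the image of $f_P$ has degree exactly $n$ and equals all of $C$.

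Next I would apply Theorem~\ref{thm:open} to obtain $P=(t-h_1)\cdots(t-h_n)$ with rotation quaternions $h_i$, and feed the same parameter $t$ into every factor. By the discussion around \eqref{eq:factorization}, the partial products are the relative poses of the links of an open $n$R-chain whose $i$-th joint rotates as $t-h_i$; thus its last link traces exactly $\{f_P(t):t\in\PS^1\}=C$ (at $t=\infty$ every factor degenerates to the identity, consistently with $1\in C$). This already gives the claim with $k=n$ and $n$ linear joint parametrizations whose degrees sum to $n$.

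Finally I would contract maximal runs of consecutive compatible factors to reach $k\le n$. If $h_i,\dots,h_{i+m-1}$ are pairwise compatible, then---this is where the norm condition $h_j\qe{h_j}\in\R$ enters---they all describe rotations about one common axis and lie in the commutative subalgebra $\R+\R g\subset\D\HS$ for a fixed rotation quaternion $g$ of that axis. Hence the block product $Q:=(t-h_i)\cdots(t-h_{i+m-1})$ has coefficients in $\R+\R g$, so $Q=R_1(t)-R_2(t)g$ with $R_1\in\R[t]$ monic of degree $m$ and $\deg R_2<m$; this is a rational parametrization of degree $m$ of the single rotation group of $g$, provided $R_1$ and $R_2$ are coprime. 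A common real factor $t-\rho$ of $R_1$ and $R_2$ would divide $Q$, and since real polynomials are central in $\D\HS[t]$ it would then divide $P$, contradicting the absence of real factors in a generic motion polynomial. So each maximal block collapses to one R-joint carrying a rational parametrization whose degree is the block length; the block lengths sum to $n$, there are $k\le n$ of them, maximality makes consecutive axes distinct so the chain is genuinely an open $k$R-linkage, and all joints are driven by the common parameter $t$, i.e.\ are parametrized simultaneously.

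I expect the last paragraph to be the main obstacle: one must verify that merging compatible consecutive factors produces an \emph{admissible} rational parametrization of a rotation group in the precise sense of Section~\ref{sec:interpretation}---in particular that $R_1,R_2\in\R[t]$, which requires that compatibility together with the Study condition forces a shared axis line rather than merely parallel axes, and that $R_1,R_2$ are coprime. The coprimality argument---propagating a hypothetical common real factor from a block to all of $P$ via the centrality of $\R[t]$ in $\D\HS[t]$ and the no-real-factor consequence of genericity---is the crux; the rest is bookkeeping on top of Theorem~\ref{thm:open}.
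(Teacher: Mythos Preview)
Your proposal is correct and follows essentially the same route as the paper: pick a monic generic motion polynomial parametrizing $C$, factor it via Theorem~\ref{thm:open}, and collapse maximal runs of compatible consecutive factors into single joints using that compatible rotation quaternions lie in $\R+\R h_i$. You are in fact more careful than the paper on one point: the paper's definition of a ``rational parametrization of degree $m$'' requires $\gcd(R_1,R_2)=1$, yet the paper's proof never verifies this, whereas your centrality argument (a common real factor of $R_1,R_2$ would divide $P$, contradicting genericity) closes that gap.
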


\begin{proof}
  Let $P\in\D\HS[t]$ be a motion polynomial of degree $n$ that
  parametrizes $C$. The primal part of $P$ cannot have real factors
  because $C$ is generic. By Theorem~\ref{thm:open}, there exists a
  factorization $P=(t-h_1)\cdots(t-h_n)$ with rotation quaternions
  $h_1,\dots,h_n$. Assume that $h_i,\dots,h_{i+m-1}$ are compatible.
  Because every dual quaternion compatible with $h_i$ is a real linear
  combination of $1$ and $h_i$, the product
  $(t-h_i)\cdots(t-h_{i+m-1})$ can be written as $R_1-h_iR_2$ for
  $R_1,R_2\in\R[t]$ such that $R_1$ is monic, $\deg(R_1)=m$,
  $\deg(R_2)<m$. This already implies the corollary's statement.
\end{proof}

\begin{rem}
  \label{rem:coinciding}
  It is sometimes advantageous to think of the open $k$R-chain
  referred to in Corollary~\ref{cor:open-chain} as an open $n$R-chain
  with the possibility of coinciding consecutive axes. See, for
  example, Theorem~\ref{thm:closed}, below.
\end{rem}

\begin{example}
  \label{ex:2}
  We give a kinematic interpretation of the calculation in Example~1.
  The two factorizations
  \begin{equation*}
    P = (t-h_1)(t-h_2) = (t-h'_1)(t-h'_2)
  \end{equation*}
  show that the motion parametrized by $P$ occurs as end-effector
  motion of two open 2R-chains, parametrized with the same rational
  parameter $t$. The axes of the rotation quaternions $h_2$, $h_1$,
  $k_1$, $k_2$ in the moving frame form a closed 4R-chain whose
  coupler motion is parametrized by $P$. Hence, we actually presented
  a method to synthesize a Bennett mechanism to three given poses.
  Without loss of generality, we assume that one of the poses is the
  identity. From this data, it is easy to compute the quadratic
  parametrization of the Bennett motion as in \cite{BHS}. It serves as
  input for our factorization algorithm. Note that in general a cubic
  algebraic number needs to be introduced for the factorization of the
  real quartic polynomial $P\qe{P}$. This has also been observed in
  other synthesis algorithms for Bennett linkages.
\end{example}

\section{Interchanging factors and closed linkages}
\label{sec:interchanging-factors}

In this chapter, we investigate the linkage formed by combining the
$n!$ open $n$R-chains that can be used to generate a generic rational
curve $C$ of degree $n$ on the Study quadric.

In order to describe the combinatorial structure of a linkage, we
recall the definition of the link graph. Recall that the rigid parts
of a linkage are called links, and the existence of a rotational joint
between two links means that the two links share a fixed line, the
rotation axis. It is possible that $m>2$ links share the same rotation
axis. In this case, we have $\binom{m}{2}$ different joints supported
at this axis. The link graph consists of a node for each link and an
edge for each joint connecting two links. This should not be confused
with the axis graph, which is also used in the literature. There, the
nodes correspond to the axes of joints and the edges correspond to
links.

If the linkage moves rationally with mobility one, for every link
there exists an algebraic function from $\PS^1$ to $\PS^7$, mapping a
time parameter $t\in\PS^1$ to the pose of the link at time $t$. The
pose can be represented by an element of $\mathrm{SE}_3$ or,
equivalently, by a point on the Study quadric. If two links $L_1$ and
$L_2$ are connected by a joint $J$, the relative motion of $L_2$ with
respect to $L_1$ is parametrized by the quotient
$\psi_J:=\phi_1\phi_2^{-1}$ of the two pose functions. This motion is
a rotation. Hence it also has parametrization by a motion polynomial
of degree one. The linkage we construct will have the property that
$\psi_J$ is a linear polynomial $(t-h_J)$.

Note that when $h_J$ is specified for all joints $J$, the relative
motions of any pair of links can be obtained by multiplication; the
linkage kinematics is fully determined. It is easy to extract the
Denavit-Hartenberg parameters from the rotation quaternions $h_J$. In
this paper we are content with the linkage specification by dual
quaternions.


\begin{thm}
  \label{thm:closed}
  Let $C\subset S$ be a generic rational curve of degree $n$ in the
  Study quadric, passing through $1$. Then $C$ is contained in the
  motion of a link in a mechanism with revolute joints, $2^n$ links
  and $n2^{n-1}$ joints. The link graph is the 1-skeleton of the
  $n$-dimensional hypercube.
\end{thm}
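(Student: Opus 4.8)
The plan is to build the mechanism explicitly from the $n!$ open $n$R-chains furnished by Theorem~\ref{thm:number} and then identify shared links. Each factorization $P = (t-h_{\sigma(1)})\cdots(t-h_{\sigma(n)})$ indexed by a permutation $\sigma$ of the quadratic factors $M_1,\dots,M_n$ of $P\qe{P}$ realizes $C$ as the coupler motion of an open chain. By Theorem~\ref{thm:number}, the linear factor $(t-h)$ associated with $M_i$ has $M_i(h)=0$, so the quaternion $h$ depends only on which subset of $\{M_1,\dots,M_n\}$ has already been ``stripped off'' on the right, not on the order in which this was done. Concretely, I would index the intermediate left factors by subsets $A\subseteq\{1,\dots,n\}$: starting from $P$, for any order in which the factors indexed by $A$ are removed on the right one gets the \emph{same} monic left factor $P_A$ of degree $n-|A|$, with $P_\emptyset = P$ and $P_{\{1,\dots,n\}}=1$. (This well-definedness is exactly the content of the remark in Section~\ref{sec:computing} that $M_i$ is the minimal polynomial of $h_i$, combined with uniqueness in Lemma~\ref{lem:root}; I would spell it out by induction on $|A|$, using that $P_A\qe{P_A} = \prod_{i\notin A}M_i$ and that the common root of $P_A$ with $M_j$ for $j\notin A$ is uniquely determined.)

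The links of the mechanism are then indexed by subsets $A\subseteq\{1,\dots,n\}$ — hence $2^n$ links — where link $A$ carries, as its pose function relative to the base link $\emptyset$, the polynomial $P_\emptyset P_A^{-1}$, i.e.\ the product of the linear factors already removed; equivalently link $A$ is the $(|A|+1)$-th link in any open chain whose first $|A|$ right-factors are exactly $\{M_i : i\in A\}$. Two subsets $A$ and $A'=A\cup\{j\}$ differing by one element $j\notin A$ are joined by a revolute joint whose relative motion polynomial is the linear factor $(t-h^{(A)}_j)$, where $h^{(A)}_j$ is the unique quaternion with $M_j(h^{(A)}_j)=0$ and $P_A(h^{(A)}_j)=0$; this is legitimate because $P_A = P_{A'}\,(t-h^{(A)}_j)$ by Lemma~\ref{lem:factor} and the proof of Theorem~\ref{thm:open}. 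Edges therefore correspond precisely to pairs $(A, A\cup\{j\})$, which are the edges of the $n$-cube: there are $n\cdot 2^{n-1}$ of them. This gives the link graph as the $1$-skeleton of the $n$-dimensional hypercube, and restricting to any directed monotone path from $\emptyset$ to $\{1,\dots,n\}$ recovers one of the $n!$ open $n$R-chains, all sharing the coupler motion $C$ of link $\{1,\dots,n\}$ relative to link $\emptyset$.

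It remains to check consistency, i.e.\ that these joints actually fit together into a single rigid mechanism: going around any $2$-dimensional face of the cube, say the square $A,\ A\cup\{i\},\ A\cup\{i,j\},\ A\cup\{j\}$, the two length-two subpaths must yield the same relative pose. But both equal $P_A P_{A\cup\{i,j\}}^{-1}$ by the well-definedness of $P_{\bullet}$ established above — that is, $(t-h^{(A)}_i)$ left-times the appropriate factor of $P_{A\cup\{i\}}$ equals $(t-h^{(A)}_j)$ left-times the appropriate factor of $P_{A\cup\{j\}}$, since each side is the unique monic degree-two right factor of $P_A$ whose norm is $M_iM_j$. Since every closed loop in the cube graph is generated by these square faces, all loop-closure conditions hold, and each $\psi_J$ is by construction linear of the form $(t-h_J)$. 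Finally, genericity of $C$ guarantees (as in Theorem~\ref{thm:open}) that each $h^{(A)}_j$ represents a genuine rotation, so all joints are revolute. The main obstacle is the well-definedness of $P_A$ — that the intermediate factor depends only on the \emph{set} of removed quadratic factors and not on the order — and this is where I would concentrate the argument; once that is in hand, the hypercube structure and the counts $2^n$ and $n2^{n-1}$ are immediate.
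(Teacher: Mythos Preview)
Your overall architecture matches the paper's exactly: links are indexed by subsets $A\subseteq\{1,\dots,n\}$, joints by hypercube edges, and the whole construction rests on showing that the intermediate left factor $P_A$ (the paper writes $P=UV$ and tracks the right factor $V$) depends only on the set $A$ and not on the removal order. You correctly identify this as the crux.

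The gap is in how you handle that crux. Your inductive sketch, combined with Lemma~\ref{lem:root}, shows only that \emph{given} a well-defined $P_A$, the next step $P_{A\cup\{j\}}$ is uniquely determined. It does not show that two different choices of last element $j,k\in A$ lead to the same $P_A$. The induction reduces this (correctly) to a local diamond: starting from $P_{A\setminus\{j,k\}}$, does removing $M_j$ then $M_k$ yield the same quotient as removing $M_k$ then $M_j$? You then assert that the two resulting degree-two right factors coincide because each is ``the unique monic degree-two right factor of $P_A$ whose norm is $M_iM_j$.'' But this uniqueness is precisely what is in question, and nothing you have invoked establishes it. Lemma~\ref{lem:root} gives uniqueness of a \emph{linear} right factor with prescribed minimal polynomial, not of a quadratic right factor with prescribed norm; indeed, unwinding your uniqueness claim just reproduces the two sides of the diamond.

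The paper closes this with a short trick you are missing: pass to the conjugate. Given $P=U_1V_1=U_2V_2$ with $V_1\qe{V_1}=V_2\qe{V_2}=\prod_{i\in F}M_i$, extend both to full factorizations of $P$ by appending the \emph{same} ordering of $G\setminus F$. Then $\qe{P}=\qe{V_\ell}\,\qe{U_\ell}$, and running Algorithm~\ref{alg:factor} on $\qe{P}$ in the reversed orders, the first $n-|F|$ steps (those coming from $G\setminus F$) are identical in the two runs. Since the algorithm is deterministic once the order of quadratic factors is fixed, this forces $\qe{U_1}=\qe{U_2}$, hence $U_1=U_2$ and $V_1=V_2$. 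Once you insert this argument, the rest of your proposal goes through as written.
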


\begin{rem}
  In Theorem~\ref{thm:closed} we can only state that $C$ is contained
  in the motion of one link. It cannot be excluded that the mechanism
  has more than one degree of freedom. An extreme case arises from the
  product $P = (t-h_1)\cdots(t-h_n)$ of compatible rotation
  quaternions $h_1,\ldots,h_n$. With the understanding of
  Remark~\ref{rem:coinciding}, the corresponding link graph can still
  be considered as 1-skeleton of a hypercube but the linkage has $n$
  trivial degrees of freedom. A sufficient condition that ensures
  equality of $C$ and the link motion is that every $4$-cycle in the
  link graph corresponds to a non-degenerate Bennett linkage.
\end{rem}

\begin{proof}[Proof of Theorem~\ref{thm:closed}]
  Let $P(t)$ be a generic motion polynomial of degree $n$,
  parametrizing $C$. We denote the $n$ irreducible quadratic factors
  of $P\qe{P}$ by $M_1,\dots,M_n$ and construct a linkage whose nodes
  are labelled by the subsets of $G:=\{1,\dots,n\}$. Two nodes are
  connected by an edge if and only if the corresponding subsets differ
  by exactly one element (Figure~\ref{fig:linkage_graph}). This graph
  is known as Hamming graph.

  Let $F\subseteq G$ be a subset of cardinality $m \le n$. By
  successively dividing out right factors $(t-h_i)$ with $M_i =
  (t-h_i)(t-\qe{h}_i) \in F$, we obtain a factorization $P=UV$ with
  monic $U,V\in\D\HS[t]$, and $V\qe{V}$ is the product of the factors
  in $F$. This can be done in $m!$ ways. We claim that the result is
  always the same. To show this, we assume that we have two
  factorizations $P=U_1V_1=U_2V_2$ corresponding to two different
  permutations $\sigma_1,\sigma_2$ of $M_1,\dots,M_n$. In our
  situation, the first $m$ elements in $\sigma_1$ are a permutation of
  the first $m$ elements in $\sigma_2$, in fact this is the set $F$,
  and we assume that the remaining elements are equal. This is
  possible because we are free to choose any order in the remaining
  factors in $G \setminus F$. Now we apply our non-deterministic
  algorithm of dividing out right factors to $\qe{P}$, in the reverse
  order of the permutations. We obtain the two factorizations
  $\qe{P}=\qe{V_1}\qe{U_1}=\qe{V_2}\qe{U_2}$. But in these two
  division processes, the first $n-m$ choices are equal, and it
  follows that $\qe{U_1}=\qe{U_2}$. Consequently, we get $U_1=U_2$ and
  $V_1=V_2$. Hence $F$ determines the right factor $V$ (and also the
  left factor $U$) uniquely, and we place the link $L_F$ at position
  $V(t)$, depending on one real parameter $t$ but not on an ordering
  of $F$. This already proves the claim on the linkage graph.

  Now we show that linkage consists of revolute joints only. Assume
  that $F_1$ and $F_2$ differ by a single element $M_i$. Without loss
  of generality we may assume $F_2=F_1\cup\{M_i\}$. Let
  $P=U_1V_1=U_2V_2$ be the factorizations obtained as above. Then we
  have $V_2=(t-h_i)V_1$, where $h_i$ is unique common solution of
  $U_1$ and $M_i$ (see Lemma~\ref{lem:root}). Hence the relative
  position of $L_{F_2}$ with respect to $L_{F_1}$, depending on $t$,
  is a rotation group parametrized by $(t-h_i)_{t\in\PS^1}$. So we see
  that $L_{F_2}$ is connected with $L_{F_1}$ by a rotational joint.

  The two links whose relative movement is the curve $C$ are easy to
  determine: They are labelled by the empty set and by~$G$.
\end{proof}

\begin{figure}
  \centering
  \includegraphics{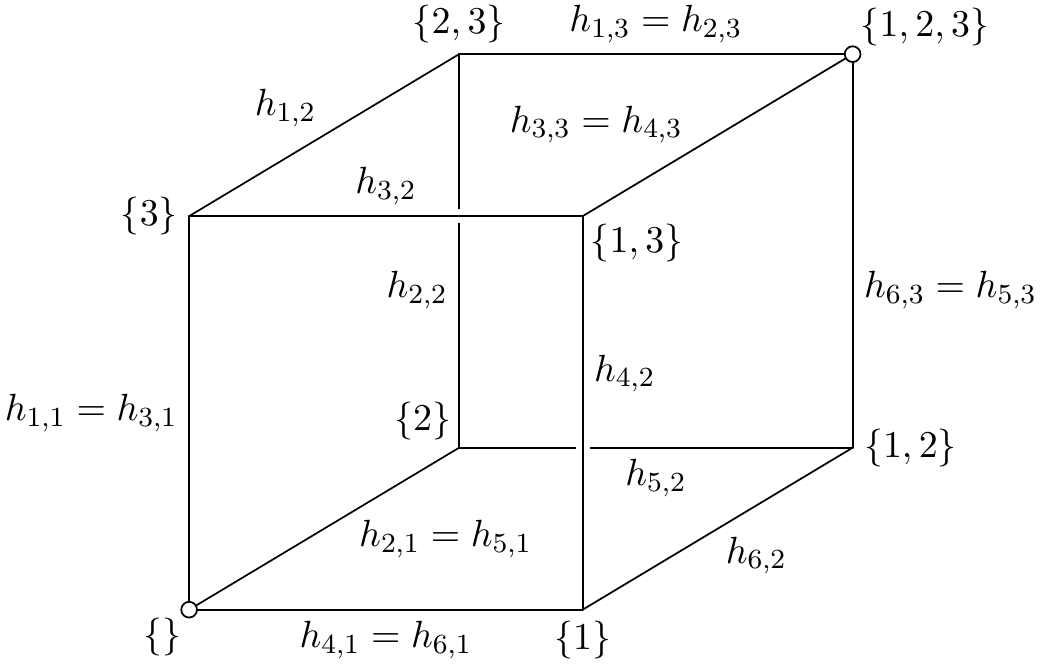}
  \caption{Linkage graph obtained by factorizing a cubic curve}
  \label{fig:linkage_graph}
\end{figure}

\begin{example}
  \label{ex:3}
  We present the construction of a linkage to the graph of
  Figure~\ref{fig:linkage_graph}. A parametrization of a rational
  cubic curve $C \subset S$ reads
  \begin{multline*}
    P = t^3 -
        t^2(3+(4-\eps)\qi+(1+3\eps)\qj+2(1+\eps)\qk) -\\
        t  (3(1+\eps)-3(3-\eps)\qi-(1+11\eps)\qj-(3+5\eps)\qk) +\\
          2(3 - (1-3\eps)\qi-(1+2\eps)\qj-(1+\eps)\qk).
  \end{multline*}
  The real polynomial $P\qe{P}$ factors as
  \begin{equation*}
    P\qe{P} = (\underbrace{t^2-2t+2}_{=:M_1})(\underbrace{t^2-2t+4}_{=:M_2})(\underbrace{t^2-2t+6}_{=:M_3}).
  \end{equation*}
  The polynomial $P$ admits six factorizations $P =
  (t-h_{l,1})(t-h_{l,2})(t-h_{l,3})$, one for each permutation of the
  triple $(M_1,M_2,M_3)$:
  \par\medskip
  \begin{supertabular}{ll}
      $(M_1,M_2,M_3)\colon$ & $\displaystyle h_{1,1} = 1+({\tfrac{65}{31}}-{\tfrac{814}{961}}\eps)\qi+({\tfrac{16}{31}}+{\tfrac{1373}{961}}\eps)\qj+({\tfrac{18}{31}}+{\tfrac{1719}{961}}\eps)\qk$                     \\[0.5ex]
                            & $\displaystyle h_{1,2} = 1+({\tfrac{395}{403}}-{\tfrac{94035}{162409}}\eps)\qi+({\tfrac{319}{403}}+{\tfrac{53380}{162409}}\eps)\qj+({\tfrac{41995}{162409}}\eps+{\tfrac{479}{403}})\qk$  \\[0.5ex]
                            & $\displaystyle h_{1,3} = 1+({\tfrac{12}{13}}+{\tfrac{72}{169}}\eps)\qi+({\tfrac{210}{169}}\eps-{\tfrac{4}{13}})\qj+({\tfrac{3}{13}}-{\tfrac{8}{169}}\eps)\qk$                            \\[1.0ex]
      $(M_1,M_2,M_3)\colon$ & $\displaystyle h_{1,1}  = 1+({\tfrac{65}{31}}-{\tfrac{814}{961}}\eps)\qi+({\tfrac{16}{31}}+{\tfrac{1373}{961}}\eps)\qj+({\tfrac{18}{31}}+{\tfrac{1719}{961}}\eps)\qk$                    \\[0.5ex]
                            & $\displaystyle h_{1,2}  = 1+({\tfrac{395}{403}}-{\tfrac{94035}{162409}}\eps)\qi+({\tfrac{319}{403}}+{\tfrac{53380}{162409}}\eps)\qj+({\tfrac{41995}{162409}}\eps+{\tfrac{479}{403}})\qk$ \\[0.5ex]
                            & $\displaystyle h_{1,3}  = 1+({\tfrac{12}{13}}+{\tfrac{72}{169}}\eps)\qi+({\tfrac{210}{169}}\eps-{\tfrac{4}{13}})\qj+({\tfrac{3}{13}}-{\tfrac{8}{169}}\eps)\qk$                           \\[1.0ex]
      $(M_1,M_3,M_2)\colon$ & $\displaystyle h_{2,1}  = 1+(\tfrac{5}{3}-\tfrac{5}{9}\eps)\qi+(\tfrac{1}{3}+{\tfrac{11}{9}}\eps)\qj+({\tfrac{14}{9}}\eps+\tfrac{1}{3})\qk$                                              \\[0.5ex]
                            & $\displaystyle h_{2,2}  = 1+({\tfrac{55}{39}}-{\tfrac{1324}{1521}}\eps)\qi+({\tfrac{38}{39}}+{\tfrac{814}{1521}}\eps)\qj+({\tfrac{748}{1521}}\eps+{\tfrac{56}{39}})\qk$                  \\[0.5ex]
                            & $\displaystyle h_{2,3}  = 1+({\tfrac{12}{13}}+{\tfrac{72}{169}}\eps)\qi+({\tfrac{210}{169}}\eps-{\tfrac{4}{13}})\qj+({\tfrac{3}{13}}-{\tfrac{8}{169}}\eps)\qk$                           \\[1.0ex]
      $(M_2,M_1,M_3)\colon$ & $\displaystyle h_{3,1}  = 1+({\tfrac{65}{31}}-{\tfrac{814}{961}}\eps)\qi+({\tfrac{16}{31}}+{\tfrac{1373}{961}}\eps)\qj+({\tfrac{1719}{961}}\eps+{\tfrac{18}{31}})\qk$                    \\[0.5ex]
                            & $\displaystyle h_{3,2}  = 1+({\tfrac{72}{217}}-{\tfrac{16813}{47089}}\eps)\qi+({\tfrac{136}{217}}-{\tfrac{7695}{47089}}\eps)\qj+({\tfrac{14752}{47089}}\eps+{\tfrac{153}{217}})\qk$      \\[0.5ex]
                            & $\displaystyle h_{3,3}  = 1+({\tfrac{11}{7}}+{\tfrac{10}{49}}\eps)\qi-({\tfrac{1}{7}}-{\tfrac{85}{49}}\eps)\qj+({\tfrac{5}{7}}-{\tfrac{5}{49}}\eps)\qk$                                  \\[1.0ex]
      $(M_2,M_3,M_1)\colon$ & $\displaystyle h_{4,1}  = 1+\qi+\eps\qj+\eps\qk$                                                                                                                                         \\[0.5ex]
                            & $\displaystyle h_{4,2}  = 1+({\tfrac{10}{7}}-{\tfrac{59}{49}}\eps)\qi+({\tfrac{8}{7}}+{\tfrac{13}{49}}\eps)\qj+({\tfrac{54}{49}}\eps+{\tfrac{9}{7}})\qk$                                 \\[0.5ex]
                            & $\displaystyle h_{4,3}  = 1+({\tfrac{11}{7}}+{\tfrac{10}{49}}\eps)\qi-({\tfrac{1}{7}}-{\tfrac{85}{49}}\eps)\qj+({\tfrac{5}{7}}-{\tfrac{5}{49}}\eps)\qk$                                  \\[1.0ex]
      $(M_3,M_1,M_2)\colon$ & $\displaystyle h_{5,1}  = 1+(\tfrac{5}{3}-\tfrac{5}{9}\eps)\qi+(\tfrac{1}{3}+{\tfrac{11}{9}}\eps)\qj+({\tfrac{14}{9}}\eps+\tfrac{1}{3})\qk$                                              \\[0.5ex]
                            & $\displaystyle h_{5,2}  = 1+(\tfrac{1}{3}-\tfrac{4}{9}\eps)\qi+(\tfrac{2}{3}-\tfrac{2}{9}\eps)\qj+(\tfrac{2}{3}+\tfrac{4}{9}\eps)\qk$                                                    \\[0.5ex]
                            & $\displaystyle h_{5,3}  = 1+2\qi+2\eps\qj+\qk$                                                                                                                                           \\[1.0ex]
      $(M_3,M_2,M_1)\colon$ & $\displaystyle h_{6,1}  = 1+\qi+\eps\qj+\eps\qk$                                                                                                                                         \\[0.5ex]
                            & $\displaystyle h_{6,2}  = 1+(1-\eps)\qi+\qj+(1+\eps)\qk$                                                                                                                                 \\[0.5ex]
                            & $\displaystyle h_{6,3}  = 1+2\qi+2\eps\qj+\qk$                                                                                                                                           \\
  \end{supertabular}
  \par\medskip
  The rotation quaternions $h_{l,i}$ correspond to edges of the
  linkage graph in Figure~\ref{fig:linkage_graph}. The six rotation
  quaternions $h_{l,2}$ are all different. The six rotation
  quaternions $h_{l,1}$ come in three pairs, corresponding to
  permutations of the shape $(M_i,M_j,M_k)$ and $(M_j,M_i,M_k)$.
  Similarly, equal rotation quaternions $h_{l,3}$ come from
  permutations $(M_i,M_j,M_k)$ and $(M_i,M_k,M_j)$, respectively.
\end{example}

\begin{rem}
  There exist cases where two consecutive rotation quaternions, like
  $h_{6,1}$ and $h_{6,2}$ in the above example, are compatible. In
  this case, the linkage graph of Figure~\ref{fig:linkage_graph} is
  still correct but with the understanding that the revolute axes to
  $h_{6,1}$ and $h_{6,2}$ coincide (compare
  Remark~\ref{rem:coinciding}). It is possible to reflect this in the
  linkage graph by removing the edge labeled $h_{6,2}$ and inserting
  an additional edge connecting $\{\}$ and~$\{1,2\}$. The relative
  motion between the the links $\{\}$ and $\{1,2\}$ is then the
  product of two linearly parametrized rotations about the same axis,
  that is, a quadratically parametrized rotation.
\end{rem}

From the description of the linkage graph given in
Theorem~\ref{thm:closed} we can draw even more conclusions:
\begin{itemize}
\item Every 4-cycle in the mechanism corresponds to a Bennett linkage
  (general case), to a planar or spherical 4-bar linkage with a
  rational coupler curve, or to four joints sharing a common revolute
  axis.
\item Any of the $n!$ paths of length $n$ that connects the base
  $\{\}$ with the platform $\{1,\ldots,n\}$ corresponds to an open
  $n$R-chain. Just as any two permutations can be transformed into
  each other by transpositions, any two of these $n$R-chains are
  connected by a series of Bennett substitutions, where two
  consecutive revolute axes $H_{l,i}$, $H_{l,i+1}$ are replaced by two
  axes $H_{m,i}$, $H_{m,i+1}$ that complete the first axis pair to the
  axis quadruple of a Bennett mechanism.
\item The mechanism's theoretical degree of freedom, computed
  according to the formula of Chebychev-Grübler-Kutzbach, is
  $f_n=6(2^n-1)-5n2^{n-1}$. Thus, $f_2 = -2$, $f_3 = -18$, $f_4 =
  -70$, $f_5 = -214$ etc. with rapid decrease as $n$ goes to $\infty$.
\item Our construction stays within the planar motion group
  $\mathrm{SE}(2)$ or the spherical motion group $\mathrm{SO}(3)$.
  That is, if the polynomial $P$ describes a planar or spherical
  motion, all rotation quaternions obtained from different
  factorizations of $P$ belong to the same group. Thus, we can
  construct planar or spherical linkages with the same combinatorics
  as in the spatial case but with a theoretical degree of freedom of
  $g_n = 3(2^n-1)-n2^n$, that is $g_2 = 1$, $g_3 = -3$, $g_4 = -19$,
  $g_5 = -67$ etc. One example is depicted in Figure~\ref{fig:planar}.
  It can be thought of as a linkage which is composed of two planar
  3RRR-platforms with identical anchor points on the base ($B_0$,
  $B_1$, $B_2$) and on the platform ($P_0$, $P_1$, $P_2$). In
  addition, there exists a hexagon formed by the middle revolute
  joints, whose side lengths remain constant during the motion. In
  Figure~\ref{fig:planar}, the hexagon sides are drawn as double
  lines. An animation of this linkage can be found on the accompanying
  web-site \url{http://geometrie.uibk.ac.at/schroecker/qf/}. Its
  theoretical degree of freedom is~$-3$.
\end{itemize}

\begin{figure}
  \centering
  \includegraphics{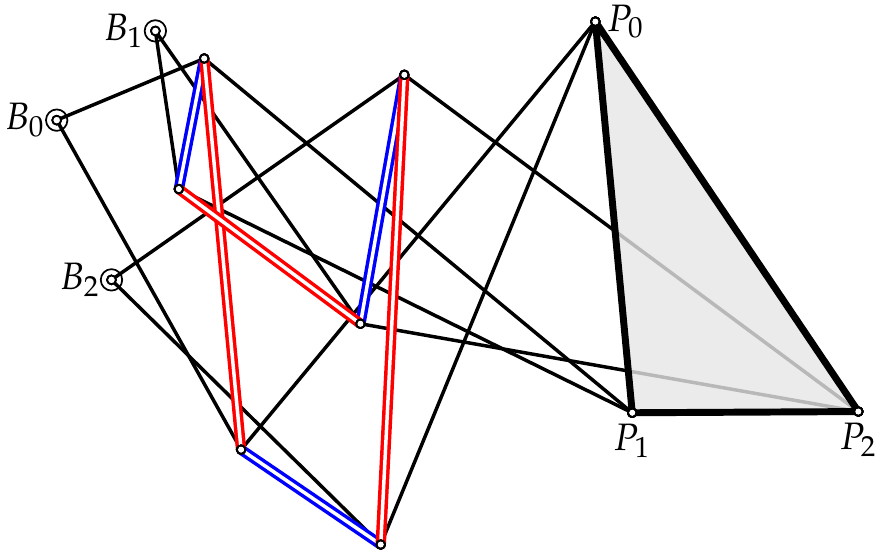}
  \caption{Planar overconstrained linkage}
  \label{fig:planar}
\end{figure}

\section{Translation quaternions and prismatic joints}
\label{sec:prismatic-joints}

Theorem~\ref{thm:closed} admits a small but interesting generalization
that allows for a mixture of translational and rotational joints. In
fact, the factorization Algorithm~\ref{alg:factor} also works under
slightly more general assumptions: We can allow that the norm
polynomial $P\qe{P}$ has a factorization into $n$ distinct monic
quadratic polynomials $M_1,\ldots,M_n$, each of which is either
irreducible or the square of a linear polynomial. In this case, the
assumptions of Lemma~\ref{lem:root} are fulfilled and we again get
$n!$ factorizations of $P$ into linear motion polynomials.

If $M$ is a square factor of the norm polynomial $P\qe{P}$, the factor
$(t-h)$ satisfying $M = (t-h)(t-\qe{h})$ parametrizes a translational
one-parameter subgroup. The primal part of $h$ is a scalar, say
$\lambda$, and $M=(t-\lambda)^2$; the dual part of $H$ is purely
vectorial and specifies the direction of the translation.

If the motion polynomial $P$ has a factorization $P\qe{P}\in \R[t]$ as
$P = (t-h_1) \cdots (t-h_n)$ such that the minimal polynomials
$M_1,\ldots,M_n$ of $h_1,\dots,h_n$ are all distinct, we can find this
factorization by Algorithm~\ref{alg:factor}, regardless whether
$h_1,\ldots,h_n$ represent rotations or translations. As in the purely
rotational case, there will be exactly $n!$ such factorizations.

If the factorization $P = (t-h_1)\cdots(t-h_n)$ contains a translation
polynomial $h_i$ with primal part $p_i$, the point $P(p_i)$ lies in
the exceptional three-space $E$. Conversely, if the curve $C$
intersects $E$ and admits a factorized parametrization $P =
(t-h_1)\cdots(t-h_n)$ with rotation or translation polynomials
$h_1,\ldots,h_n$, $\primal(P)$ must vanish for some parameter value
$p$. This implies that one of the factors $(p-h_1),\ldots,(p-h_n)$ has
zero primal part, that is, $\primal(h_i)=p \in \R$ and $h_i$ is a
translation polynomial.

We illustrate this at hand of two examples.

\begin{example}
  Assume that $P(t) = (t-h_1) \cdots (t-h_n)$ and all quaternions
  $h_1,\ldots,h_n$ are translation quaternions. This means, that the
  motion is a pure translation which can be described by the sum of
  vectors $s(t) = (u_1 + tv_1) + \cdots + (u_n + tv_n)$ with $u_i,v_i
  \in \R^3$. But $s(t)$ also equals the sum of any permutation of the
  summands $u_i + tv_i$. This trivial statement is a special case of
  Theorem~\ref{thm:number}.
\end{example}

\begin{example}
  \label{ex:rprp}
  The polynomial
  \begin{equation*}
    P = t^2 - t(2 + (1-\eps)\qi + (1+\eps)\qj +
    (1+2\eps)\qk) + 1-2\eps + (1-\eps)\qi + (1+2\eps)\qj + (1+\eps)\qk
  \end{equation*}
  admits the factorizations $P = (t-h_1)(t-h_2) = (t-k_1)(t-k_2)$
  where
  \begin{equation*}
    \begin{gathered}
      h_1 = 1 + \frac{1}{3}((3-7\eps)\qi + (3+2\eps)\qj + (3+5\eps)\qk),\quad
      h_2 = 1 + \frac{\eps}{3}(4\qi + \qj + \qk)\\
      k_1 = 1 + \eps\qj + \eps\qk,\quad
      k_2 = 1 + (1-\eps)\qi + \qj + (1+\eps)\qk.
    \end{gathered}
  \end{equation*}
  Both, $k_1$ and $h_2$ are translation quaternions. Indeed, $P(1) =
  \eps(-2+\qj+\qk) \in E$, that is, the curve $C$ with parametrization
  $P$ intersects the exceptional three-space~$E$.
\end{example}

The kinematic interpretation of Example~\ref{ex:rprp} is well-known.
The motion $C$ is the coupler motion of an RPRP linkage \cite{Perez}
and can be obtained as the limiting case of the Bennett motion The
observations made in Example~\ref{ex:rprp} immediately generalize to
higher degree polynomials.

\section{New overconstrained 6R-chains}
\label{sec:overconstrained-6r}

For $n = 3$, the mechanism described in the preceding section contains
a number of overconstrained 6R-chains, some of which are new. This is
also true for certain examples in case of $n=4$. Although only a side
result, the discovery of new overconstrained 6R-chains is an important
contribution of this article. Therefore we present a more detailed
discussion.

\subsection{Coupler curves of degree three}

Here, we assume $n=3$. The link graph is the 1-skeleton of a
three-dimensional cube. The relative motion between the link labeled
by the empty set $F=\{\}$ (the base) and the link labeled
$G=\{1,2,3\}$ (the platform) is a rational cubic curve $C \subset S$
in the Study quadric. In general, there is nothing special about links
$F$ and $G$. We could as well take any two links which are diagonally
opposite in the cube.

Every path of length three that connects $F$ and $G$ corresponds to an
open 3R-chain which is capable of performing the motion $C$. Combining
two of these paths yields an overconstrained 6R-chain with coupler
motion $C$. There is a total of six paths of lengths three connecting
$F$ and $G$ and a total of 15 possible combinations of two such paths.
They can be classified into three types of different combinatorics
(Figure~\ref{fig:cube_paths}) or, equivalently, into different types
of pairs of permutations. We say that two permutations $\sigma_1$ and
$\sigma_2$ differ by $\sigma_2 \circ \sigma_1^{-1}$.
\begin{itemize}
\item The first type (a) corresponds to permutations which differ by a
  neighbor transposition. The resulting 6R-chain is trivial. It
  consists of a Bennett linkage plus one dangling link. The complete
  mechanism contains six sub-linkages of this type.
\item The second type (b) corresponds to permutations that differ by a
  cyclic permutation. The resulting 6R-chain is Waldron's double
  Bennett hybrid \cite[pp.~63--65]{dietmaier95}. There are six
  sub-linkages of this type.
\item The third type (c) is new. It corresponds to permutations that
  differ by the permutation $(3,2,1)$. Three of the 15 6R-chains are
  of this type.
\end{itemize}

\begin{figure}
  \centering
  \includegraphics{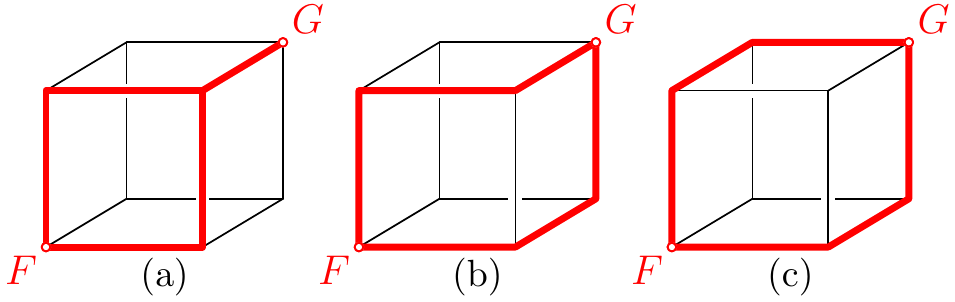}
  \caption{Different types of closed 6R-chains: Bennett linkage plus
    dangling link (trivial) (a), Waldron's double Bennett hybrid (b),
    and new (c)}
  \label{fig:cube_paths}
\end{figure}

Currently, overconstrained 6R-chain are classified by relations
between their Denavit-Hartenberg parameters (distance, angle, and
offset) on the cyclic sequence of revolute axes, see for example the
list at the end of \cite{B09}. The reasons for their mobility are of
geometric nature. Often, they are inferred from the mobility of
related structures that are already known to be flexible. Our newly
found examples are mobile for \emph{algebraic reasons} or, more
precisely, our construction and proof of mobility is algebraic.
Therefore, we do not offer simple relations between their
Denavit-Hartenberg parameters. However, it is possible to compute
exact symbolic expressions of these parameters for numeric examples.
Below we list the angles, distances and offsets for the three new
overconstrained 6R-chains obtained from Example~\ref{ex:3}.

\begin{description}
\item[] Linkage
  $h_{1,3}$, $h_{1,2}$, $h_{1,1}$, $\qe{h}_{6,1}$, $\qe{h}_{6,2}$, $\qe{h}_{6,3}$
  \par
  Distances: $\frac{16\sqrt{29}}{377}$, $\frac{\sqrt{1115179082}}{63302}$, $\frac{37\sqrt{854}}{1586}$, $\frac{24\sqrt{145}}{899}$, $\frac{\sqrt{2}}{2}$, $\frac{\sqrt{6}}{6}$
  \par
  Offsets: $\frac{7945}{59218}$, $\frac{38174\sqrt{3}}{62281}$, $\frac{545\sqrt{5}}{3538}$, $\frac{7}{58}$, $\frac{2\sqrt{3}}{3}$, $\frac{11\sqrt{5}}{58}$
  \par
  Angle cosines: $\frac{27\sqrt{5}}{65}$, $\frac{29\sqrt{3}}{93}$, $\frac{41\sqrt{15}}{195}$, $\frac{13\sqrt{5}}{31}$, $\frac{\sqrt{3}}{3}$, $\frac{\sqrt{15}}{5}$
\item[]Linkage
  $h_{2,3}$, $h_{2,2}$, $h_{2,1}$, $\qe{h}_{4,1}$, $\qe{h}_{4,2}$, $\qe{h}_{4,3}$
  \par
  Distances: $\frac{\sqrt{1115179082}}{185822}$, $\frac{8\sqrt{29}}{87}$, $\frac{37\sqrt{854}}{1586}$, $\frac{\sqrt{2}}{6}$, $\frac{12\sqrt{145}}{203}$, $\frac{\sqrt{6}}{6}$
  \par
  Offsets: $\frac{7945}{59218}$, $\frac{1765\sqrt{5}}{3538}$, $\frac{16\sqrt{3}}{61}$, $\frac{7}{58}$, $\frac{31\sqrt{5}}{58}$, $\frac{968\sqrt{3}}{3063}$
  \par
  Angle cosines: $\frac{151\sqrt{3}}{273}$, $\frac{4\sqrt{5}}{15}$, $\frac{41\sqrt{15}}{195}$, $\frac{5\sqrt{3}}{9}$, $\frac{2\sqrt{5}}{7}$, $\frac{\sqrt{15}}{5}$
\item[]
  Linkage $h_{3,3}$, $h_{3,2}$, $h_{3,1}$, $\qe{h}_{5,1}$, $\qe{h}_{5,2}$, $\qe{h}_{5,3}$
  \par
  Distances: $\frac{\sqrt{6}}{42}$, $\frac{\sqrt{1115179082}}{63302}$, $\frac{12\sqrt{145}}{203}$, $\frac{37\sqrt{854}}{11346}$, $\frac{\sqrt{2}}{2}$, $\frac{8\sqrt{29}}{87}$
  \par
  Offsets: $\frac{968\sqrt{3}}{3063}$, $\frac{53315}{59218}$, $\frac{545\sqrt{5}}{3538}$, $\frac{16\sqrt{3}}{61}$, $\frac{53}{58}$, $\frac{11\sqrt{5}}{58}$
  \par
  Angle cosines: $\frac{9\sqrt{15}}{35}$, $\frac{29\sqrt{3}}{93}$, $\frac{2\sqrt{5}}{7}$, $\frac{359\sqrt{15}}{1395}$, $\frac{\sqrt{3}}{3}$, $\frac{4\sqrt{5}}{15}$
\end{description}

\subsection{Coupler curves of degree four}

Now we consider a second construction of new overconstrained 6R-chains
whose coupler curve is of degree four ($n=4$). Let $h_1,h_2,h_3,h_4$
be dual quaternions representing rotations. Set
$P=(t-h_1)(t-h_2)(t-h_3)(h-h_4)$ and $M_i:=(t-h_i)(t-\qe{h_i})$ for
$i=1,2,3,4$. The link graph of the linkage constructed in
Theorem~\ref{thm:closed} has 16 links and 32 joints. Any open chain
connecting base and platform has four links, any closed chain composed
of two open chains of this type has eight links. In order to get a
6R-chain, we assume that $(h_1,h_2)$ and $(h_3,h_4)$ are compatible
pairs. Now we use Algorithm~\ref{alg:factor} to compute another
factorization $P=(t-g_1)(t-g_2)(t-g_3)(t-g_4)$. This gives rise to an
8R-linkage. But we can remove the link between $h_1$ and $h_2$ and the
link between $h_3$ and $h_4$ in order to get a 6R-linkage. In contrast
to the previous examples, the relative motion of some neighbouring
links is rational of degree two.

The second factorization depends on the chosen permutation of the
factors $M_i$, $i=1,\dots,4$. Of course, using the permutation
$(M_1,M_2,M_3,M_4)$ is prohibited. Because of compatibility relations,
we also have
\begin{equation*}
  P=(t-h_2)(t-h_1)(t-h_3)(t-h_4)=(t-h_1)(t-h_2)(t-h_4)(t-h_3).
\end{equation*}
Hence, we must avoid common right or left factors with these two
factorizations as they lead to dangling links. In total, there are
$16$ permutations of this type, because they start with $M_1$ or
$M_2$, or end with $M_3$ or $M_4$, or both.

A possible choice of a permutation is $(M_3,M_1,M_4,M_2)$. The
resulting linkage is known as ``serial Goldberg 6R linkage'' (see
\cite[4.3.1]{dietmaier95}). There are four permutations that lead to
this type of linkage.

Another possible choice is $(M_3,M_4,M_1,M_2)$. The resulting linkage
is of a new type and, again, can be obtained from four different
permutations. There are some apparent relations between the
Denavit-Hartenberg parameters, which can be explained by the fact that
the axes of three joints of the 6R-linkage are axes of a 4-cycle of
the full ``hypercube'' linkage. But these relations are not sufficient
for the mobility.

\section{Conclusion}
\label{sec:conclusion}

Factorization of motion polynomials is a new and powerful method for
synthesizing linkages with a prescribed rational coupler motion. We
gave an explicit construction, based on dual quaternion factorization,
for such linkages. It allows to incorporate translation quaternions
(prismatic joints) and compatible quaternions (high degree relative
rotations of neighboring links).

We studied the resulting linkages and explained how to obtain new
types of closed 6R-chains from them. We provided ideas for their
construction, but the method leaves room for creativity which may lead
to more new types. For example, it is possible to construct
overconstrained chains with $2k$ prismatic and $6-2k$ revolute joints
by combining the ideas of Sections~\ref{sec:prismatic-joints} and
\ref{sec:overconstrained-6r}. Coinciding consecutive joints will
produce overconstrained chains consisting of five joints (Goldberg
linkages).

An important new insight is that flexibility of overconstrained
6R-chains has rather algebraic than geometric reasons. Currently, the
authors work on a systematic description and computational tools for
synthesizing linkages based on this new method.

\section*{Acknowledgements}

This research was supported by the Austrian Science Fund (FWF):
I~408-N13 and DK~W~1214-N15.

\bibliographystyle{elsarticle-num-names}
\bibliography{ark2}

\end{document}